\newtheorem{theorem}{Theorem}
\newtheorem{proposition}{Proposition}
\newtheorem{definition}{Definition}
\newtheorem{cor}{Corollary}
\newcommand{\ie}{i.e.,~}
\newcommand{\eqqref}[1]{Eq.~(\ref{#1})}
\newcommand{\eqqrefs}[1]{Eqs.~(\ref{#1})}
\newcounter{Alg}
\newenvironment{algorithm}[1]{
\refstepcounter{Alg}{\flushleft\bf Algorithm \theAlg.}\ \textit{#1}}
{}
\title{On the Solution to a Countable System of Equations \\ Arising in 
Stochastic Processes}
\author{M.N. Katehakis, L.C. Smit,  F.M. Spieksma}
\begin{document}

\maketitle
\section*{Abstract}
In this paper we 
 develop a  method to  compute the solution to 
 a countable (finite or infinite) 
 set of equations that occurs in many different fields including Markov processes
 that model  queueing systems, birth-and-death processes and inventory systems.
 
 The method provides a fast and exact computation of the inverse of the matrix  of the coefficients 
 of the system.  In contrast, alternative inverse techniques perform much slower and work only for finite size matrices.

Furthermore, we  provide a procedure to construct the eigenvalues of the matrix under consideration.


\section{Introduction}
\subsection{Motivation}
In this paper we 
 develop a  method to  compute the solution to 
 a countable (finite or infinite) 
 set of equations that occurs in many different fields and systems including Markov processes
 that model  queueing systems, birth-and-death processes and inventory systems. For such systems 
in order  to compute performance measures and other quantities of interest, it is often required 
to  invert this matrix.   A class of problems for  which the inverse of this matrix must be computed is given in \cite{Katehakis2015comparative}.

 The method provides a fast and exact computation of the inverse of the matrix  of the coefficients 
 of the system.  In contrast, alternative inverse techniques perform much slower and work only for finite size matrices.  The  more relevant alternative methods  are discussed in this paper,
  for comparison purposes. It is shown that 
  although some cover more general classes of matrices, the method developed in this paper provides a procedure for matrices of infinite size and outperforms all alternative methods in speed.
       As far as we could find,  there are no results   in literature  for countable sized matrices of this form. Existing algorithms for finite matrices perform in general slower than the method we provide.

Apart from the inverse, we have also identified a fast way to compute the eigenvalues of the matrix under consideration, starting with those of a much easier to analyze matrix, one with a birth-and-death structure. Knowing the values of these eigenvalues or just their bounds aids enormously in analysing the specifics of the models of interest.

This paper is organized as follows. First we will introduce some of the notation used in this paper and necessary to understand the review of other methods. Second in Section~\ref{sec:lit}, we will identify some of the existing procedures and specify in what directions they overlap the method discussed in this paper. In Section~\ref{sec:inv} we formally provide the method with its specifications subdivided in four possible matrix forms.  Specifically, see Algorithm~\ref{Alg:Invd}, for the computation of the inverse.
Next, in Section~\ref{sec:eigen} we will exploit the structure of the matrix and derive some results regarding the eigenvalues of the specific matrix.
Finally, in Section~\ref{sec:applications} we will give several applications wherein this matrix-structure appears naturally. Further we obtain  Algorithm~\ref{Alg:HomInv} and \ref{Alg:InvBD} for the cases of element homogenous $B$ and birth-and-death processes with an absorbing state.

\subsection{Preliminaries}
In this paper we develop an efficient computation procedure for the solution vector   $x$  of size 
 $\ell+1$  ($\ell \le \infty $) 
of  a possibly countable system of linear equations of the form 
\begin{equation}\label{eq:bx}
xB=y,
\end{equation}
where   $y$  is a vector  of size 
 $\ell+1   ,$  $B$ is a matrix of size 
$(\ell+1)\times(\ell+1)$ and  it has a structure of the form below:

%
%
\begin{equation}\label{eq:b}
B= \left[\begin{array}{rccccc}
-b^d_0-b^u_0&b^u_0&0&0&0&\cdots\\ 
b^d_1+b^z_1&-b^w_1&b^u_1&0&0&\cdots\\ 
b^z_2&b^d_2&-b^w_2&b^u_2&0&\ddots\\
b^z_3&0&b^d_3&-b^w_3&b^u_3&\ddots\\
b^z_4&0&0& b^d_4&-b^w_4&\ddots\\
\vdots&\vdots&\ddots&\ddots&\ddots&\ddots
\end{array}\right] ,
\end{equation}

where in the above $b^j_i$ ($i=1,2,3,\ldots$ and $j=d,w,u$) are non-negative real numbers and they 
must satisfy the conditions below:
\begin{align}
b_{i}^{z}+b_{i}^{d}+b_{i}^{u} & =b_{i}^{w} ,\label{eq:rowsum}\\
b_{i}^{w} & >0 ,\label{eq:posum}\\
  b^d_{0} & > 0 . \label{eq:bod}
\end{align}

To compute the solution of   \eqqref{eq:bx}  one needs to compute the inverse of matrix $B,$ i.e., the 
matrix $C=B^{-1}$. In Section \ref{sec:inv} we provide constructive algorithms of computing $C$. These methods establish the existence of the inverse and aid to derive a way to truncate a matrix $B$ with arbitrary precision when it is of infinite size. 
   
In this paper we will distinguish $4$ cases: we consider both a finite and an infinite sized matrix, and in both cases the transitions can be of a homogenous or non-homogenous structure. All of these four configurations follow a similar procedure, but have some technical differences that can enhance the speed and usability of the algorithm because of their specific structure.

\subsection{Related Literature}\label{sec:lit}
The computation of the inverse of a general matrix is a procedure with a relatively high complexity when the size of the matrix is large. Therefore, there is an extensive literature devoted to 
  studying this problem for matrices of special structure cf. \cite{ben2003g},  \cite{barrett1981inverses} and \cite{kilicc2013inverse},  \cite{mallik2001inverse}. 

In this section   we  discuss the main existing 
  procedures to compute the inverse of matrix $B$ of the form of \eqqref{eq:b}, 
   we will assume that $\ell$ is finite, since these   procedures  are mainly  applicable for finite sized matrices only.
Further,   
   it is convenient  to change notation and take 
  $B$ to be a matrix of size $\ell\times \ell$. Finally,  we   will decompose $B$ as a combination of the following matrices:
\begin{equation}\label{eq:buw}
B=\widetilde{U}-W,
\end{equation}
where  $$\widetilde{U}=u \delta$$ with $u$ a column vector and $\delta$ a row vector identically equal to zero and with a $1$ as its first entry.

Computing the inverse of $W$ can be done in various ways, since $W$ has such a specific structure.
 Below we describe the main existing approaches in the literature that are based on  using different properties of $W$.

First, we can analyse $W$ from a tridiagonal form perspective (see \cite{mallik2001inverse}). A tridiagonal matrix is a band matrix (see \cite{barrett1981inverses} and \cite{kilicc2013inverse}) with a bandwidth of $2$; only the main diagonal and the two adjacent diagonals contain nonzero elements.
In \cite{mallik2001inverse} a procedure to construct a LU decomposition is provided for tridiagonal matrices.
However to multiply a lower diagonal matrix with an upper diagonal matrix still takes $\mathcal{O}(N^{3})$ elementary operations. From \cite{heinig1984algebraic}, page 61, we construct the inverse of $B$ using the inverse of $W$, if it exists:
\begin{equation}\label{eq:BinvW}
B^{-1}=W^{-1}+W^{-1}ua^{-1}\delta W^{-1},
\end{equation}
where the scalar $a$ is defined as:
$$a=1-\delta W^{-1} u.$$
With a similar argument that has been used to prove that $B$ is invertible, we can prove that the determinant of $W$ is nonzero.
The matrix multiplications that are used to construct $B^{-1}$ using Eq.~(\ref{eq:BinvW}) from the separate elements are of $\mathcal{O}(\ell^{2})$, as is shown below. We will compute $B^{-1}$ in this lineup:
$$B^{-1}=W^{-1}+a^{-1}W^{-1}u\delta W^{-1}=W^{-1}+a^{-1}((W^{-1}u)\delta) W^{-1}$$
In this order, each of the multiplication steps requires at most $\mathcal{O}(\ell^{2})$ operations. Therefore, if the inversion of $W^{-1}$ takes at most $\mathcal{O}(\ell^{2})$ operations, the inversion of $B$ has the same order of operations, since the other action are vector times matrix multiplications.

Second, we note that if matrix $B$ is element homogenous, the matrix $W$ is (almost) of Toeplitz form (cf.~\cite{ammar1996classical, ammar1988superfast, martinsson2005fast}), only the lower right corner element is inconsistent with this classification, since the matrix is of finite dimension and the row sum of row $\ell$ is zero.
We can make approximations to estimate the influence of this disruption.
Many of the methods associated with Toeplitz matrices  are devoted on  finding  the solution of the system $Wx=k$ where $x$ is unknown and $k$ a vector of size $N$. For the purposes that we have in mind, we will need to compute \emph{every} element explicitly and most algorithms are not designed to do so. However, some of them are superfast (i.e.~of $\mathcal{O}(N\,log(N))$), and are useful in other related settings.

In \cite{li2010inverses}, Section 3.3, a fast algorithm that can be applied to matrix $W$ (an M-matrix in that paper) is provided for inverting a tridiagonal matrix in an efficient way. However, the procedure does not readily extend to countable matrices. Our method can be extended, since it uses the extra property that the row sum is zero for all rows except the first. Furthermore, the algorithms of \cite{li2010inverses} require that $b^{d}_{i}b^{u}_{i}\neq 0$ for all $i\in\{2,3,\ldots,N\},$ while the algorithms in this paper do not require these properties.
The proposed algorithm of \cite{li2010inverses} requires to consider the all elements of the matrix, to express off diagonal elements in terms of a diagonal element. The diagonal elements are constructed by a recursive formula that uses the last element of this sequence to explicitly express the first elements. In the non-homogenous case, we encounter the same challenges for infinite state spaces as other algorithms. However, with our procedure we get a sequence that converges to zero, in contrast to the ones in \cite{li2010inverses}. This makes estimations slightly easier to manage. For algorithms regarding the homogenous structure we do not have this issue.

Also, the algorithms in the current paper readily extend to cases where the first two columns contain nonzero elements, possibly even non homogenous and not of the $c\cdot r$ form, something that the algorithm in \cite{li2010inverses} combined with Equation~(\ref{eq:BinvW}) can not do.

Besides their own algorithm, the article of Li et al.~(\cite{li2010inverses}) contains a clarifying table (cf.~Table 1, page 979 of that paper) with several different algorithms of other papers and their corresponding algorithms. It confirms that various other algorithms exist that have the same speed as the ones presented in this paper, but are not easily extendable to countable cases by their nature.

Fourth, matrix $W$ satisfies the framework of a Hessenberg matrix, since it is the summation of a lower diagonal matrix and a tridiagonal matrix.
In \cite{ikebe1979inverses} an algorithm is provided to find two vectors $x$ and $y$ such that $xy$ is the upper part of the inverse, i.e.~$c_{ij}=x_{i}y_{j},$ and similarly for the lower part of the algorithm. However, this is not a computationally efficient algorithm, since it is designed to consider a much larger class of matrices.

To conclude this comparison, our method works for non homogenous cases, can be extended to matrices with two (or more) nonzero columns that are not in the same span, is correct for infinite state spaces and gives an explicit solution independent of the truncation size.
Therefore we think that this algorithm is a major contribution to existing approaches.

\section{Efficient Computation of the Inverse  of  Matrix \emph{B}}\label{sec:inv}
In this section we will describe the four appearances of matrix $B$ and describe their solution procedures. The general idea of the methods is similar, but their are specific differences and simplifications, specific for each case.
 \subsection{The Non Homogenous - Infinite Dimensional Case}
In the sequel, for notational simplicity we let $C$ denote the inverse of $B$, \ie $C:=B^{-1}$. The $(i,j)th$ element of $C$ will be denoted respectively by $c(i,j).$  We will use the notation $B_i$ and $B'_j$ (respectively $C_i$ and $C'_j$) to denote the $i^{th}$ row and 
$j^{th} $ column of matrix $B$ (respectively matrix $C$);  $i,j=0,1,\ldots,\ell$.
   Algorithm \ref{Alg:Invd} below   is based  on the results of  Propositions~\ref{prop:firstcol}-\ref{prop:ldiagd}. 
It  finds  a  matrix $C$ that satisfies  $CB=BC=I$, by computing  recursively the so far unknown elements of the sequence $C'_0,$  $C_0,$ $C'_1, C_1, C'_2,C_2,\ldots, C'_{n-1},C_{n-1},C'_n$   
  for increasing $n \ge 1,$ as described below. 
The algorithm depends on the computation of a sequence of constants, $\gamma_i$, $i=1,2,\ldots, $
that can be computed if we assume that $b^{d}_{i}>0$ for all $i=0,1,\ldots.$ In addition we first require that $b^{u}_{i}>0$: for simplicity we present the algorithm under this assumption. In Proposition \ref{prop:gamma1}  we will show how   $\gamma_i $, $i=1,2,\ldots, $ can be computed when  $b_{i}^{d}=0  $ for some $i.$ In addition, we will briefly explain how adjustments can be made to handle $b^{u}_{i}=0$ for some $i$.

\begin{algorithm}{  }\label{Alg:Invd}
\begin{itemize}
\item[]
At stage 0: 
\begin{enumerate}[label=\alph*)]
\item
The $0^{th}$  column of $C$ (i.e., the column containing elements $c(i,0)$) is computed  using  Eq.~(\ref{eq:c1})  of Proposition \ref{prop:firstcol}. 
\item   All elements of the $0^{th}$ 
 row of  $C$ (elements $c(0,j)$) are computed  using  Eqs.~(\ref{eq:defgamma}) and (\ref{eq:gam})  of Proposition \ref{prop:rdiag},
where $\gamma_{1}$ is given by  Eq.~(\ref{eq:gammai})  of Proposition  \ref{prop:udiag}.
\end{enumerate}
\item[]
At stage $i= 1,2,\ldots,\ell$\ : 
\begin{enumerate}[label=\alph*)]
\item
For the $i^{th}$   column  $C'_i$, since the $c(0,i),\ldots,c(i-1,i)$ have already been computed,
we calculate  the remaining  by Eq. (\ref{eq:ci}) of Proposition \ref{prop:ldiag}. 
\item
For the $i^{th}$   row  $C_i$, since the $c(i,0),\ldots,c(i,i)$ have already been computed,
we calculate  the remaining  by Eq. (\ref{eq:cid}) of Proposition \ref{prop:ldiagd}. 
\end{enumerate}
\end{itemize}
\end{algorithm}

In the following propositions we will show that the algorithm above is correct.

\begin{proposition}\label{prop:firstcol}
The following is true for all $i\geq 0:$ 
\begin{equation}
\label{eq:c1} c(i,0) =-1/b_{0}^{d}.
\end{equation}
\end{proposition}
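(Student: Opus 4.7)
The plan is to verify directly that the constant column vector with every entry equal to $-1/b_0^d$ satisfies the defining equation of the first column of $C$. Since $C = B^{-1}$, the first column $C'_0$ is, by definition, the unique vector satisfying $BC'_0 = e_0$, where $e_0$ is the first standard unit column. Therefore, exhibiting any solution to this system identifies $C'_0$. Note that there is no convergence worry in the infinite-dimensional setting here, because each row of $B$ has only finitely many (at most four) nonzero entries, so $(BC'_0)_i$ is a finite sum for every $i$, regardless of whether $C'_0$ is the constant vector or not.

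For the verification itself I would split into two cases. For $i = 0$, row $0$ of $B$ has only the two nonzero entries $-b_0^d - b_0^u$ and $b_0^u$, so
\begin{equation*}
(BC'_0)_0 = -\tfrac{1}{b_0^d}\bigl(-b_0^d - b_0^u + b_0^u\bigr) = 1,
\end{equation*}
using $b_0^d > 0$ from (\ref{eq:bod}). For $i \geq 1$, reading off (\ref{eq:b}) the row $i$ of $B$ contains the entries $b_i^z$, $b_i^d$, $-b_i^w$, $b_i^u$ (with $b_1^z$ and $b_1^d$ combined in column $0$ in the case $i=1$), and by (\ref{eq:rowsum}) these sum to $b_i^z + b_i^d + b_i^u - b_i^w = 0$. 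Since $c(k,0)$ is the same constant $-1/b_0^d$ for all $k$, we get
\begin{equation*}
(BC'_0)_i = -\tfrac{1}{b_0^d}\sum_k B_{ik} = 0,
\end{equation*}
which matches the $i$-th entry of $e_0$. Hence $C'_0 = -\tfrac{1}{b_0^d}\mathbf{1}$, proving (\ref{eq:c1}).

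There is essentially no obstacle here: the proposition serves as the base case of the recursive construction in Algorithm~\ref{Alg:Invd}, and the only ingredient used is the row-sum identity (\ref{eq:rowsum}) together with the positivity assumption (\ref{eq:bod}) ensuring that $1/b_0^d$ is well-defined. Invertibility of $B$ is tacitly assumed (and later established by the algorithm itself producing a two-sided inverse), which is what legitimizes identifying the first column of $B^{-1}$ with the unique solution of $BC'_0 = e_0$.
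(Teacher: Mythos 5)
Your proof is correct and follows essentially the same route as the paper: both write out the system $BC'_0=\delta'_0$, check that the constant vector $-\tfrac{1}{b_0^d}\mathbf{1}$ satisfies the first equation and (via the row-sum identity (\ref{eq:rowsum})) annihilates every subsequent row, and then appeal to uniqueness of the solution. Your version is slightly more explicit about why the infinite sums are finite and where uniqueness is being assumed, but the substance is identical.
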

\begin{proof}
By considering the set of equations  $BC'_0=\delta'_0$ we find:
\begin{align*}
(-b^d_{0}-b^u_{0})c(0,0) + b^u_{0} c(1,0)&= 1,   \\
 b^z_{i} c(0,0) + b^d_{i} c(i-1,0) -b^w_{i} c(i,0) + b^u_{i} c(i+1,0)&= 0,\  \mbox{for} \ i \geq 1.
\end{align*}
Using the definition of $b^{w}_{i}$ in Eq.~(\ref{eq:rowsum}) it is easy to see that for all $i\geq 0$,
$c(i,0) =-1/b_{0}^d,$  is a solution that suffices and therefore has to be the unique solution.
\end{proof}

Proposition \ref{prop:rdiag} below shows that the elements $c(0,j)$ 
 with $j>0$ depend only on $c(0,0)$. Note that $c(0,0)$ is nonzero by its construction in Proposition~\ref{prop:firstcol}.
 
\begin{proposition}\label{prop:rdiag}Define  
\begin{equation}\label{eq:defgamma}
\gamma_{j}:=c(0,j)/c(0,0).
\end{equation} 
There exist scalars $\rho_{j}$, $\eta_{j},$ with $j=1,2,\ldots$ such that all constants $\gamma_{j}$ can be recursively computed as a function of $\gamma_{1}$  as follows:
  \begin{equation}\label{eq:gam}
  \gamma_{j}=\rho_{j}\gamma_{1}+\eta_{j},
  \end{equation}
where the constants  $\rho_{j}$ and $\eta_{j}$ are given by, under the assumption that $b_{j}^{d}>0$:  
  \begin{equation}\label{eq:rho}\rho_{j}=\frac{b_{j-1}^{w}\rho_{j-1}-b_{j-2}^{u}\rho_{j-2}}{b_{j}^{d}} 
   \end{equation}
 and 
  \begin{equation}\label{eq:eta}\eta_{j}=\frac{b_{j-1}^{w}\eta_{j-1}-b_{j-2}^{u}\eta_{j-2}}{b_{j}^{d}},
    \end{equation}
 with initial values: $\rho_{1}=1,\,\eta_{1}=0,\,\rho_{2}=b^{w}_{1}/b^{d}_{2},\,\eta_{2}=-b^{u}_{0}/b^{d}_{2}$.
\end{proposition}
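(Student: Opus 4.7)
The plan is to use the matrix identity $CB = I$ on its top row and derive a linear three-term recurrence for the scaled quantities $\gamma_j := c(0,j)/c(0,0)$, then split that recurrence into two independent ones by linearity.

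First, I would expand the $(0,j)$-entry of $CB$ for $j\geq 1$. Since the nonzero entries in column $j$ of $B$ sit only in rows $j-1, j, j+1$, with values $b^u_{j-1}, -b^w_j, b^d_{j+1}$ respectively, the identity $(CB)_{0,j} = 0$ reads
$$b^u_{j-1}\,c(0,j-1) - b^w_j\,c(0,j) + b^d_{j+1}\,c(0,j+1) = 0, \qquad j \geq 1.$$
Dividing through by $c(0,0) = -1/b^d_0 \neq 0$ (from Proposition~\ref{prop:firstcol}) and noting that $\gamma_0 = 1$ by definition, this becomes the homogeneous linear second-order recurrence
$$b^d_{j+1}\,\gamma_{j+1} = b^w_j\,\gamma_j - b^u_{j-1}\,\gamma_{j-1}, \qquad j \geq 1.$$
Under the standing assumption $b^d_j > 0$, dividing and reindexing yields
$$\gamma_j = \frac{b^w_{j-1}\,\gamma_{j-1} - b^u_{j-2}\,\gamma_{j-2}}{b^d_j}, \qquad j \geq 2,$$
which is precisely the template underlying \eqqref{eq:rho} and \eqqref{eq:eta}.

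Next, since the recurrence is linear of second order with the fixed boundary value $\gamma_0 = 1$, every solution depends affinely on the free parameter $\gamma_1$ (which will be pinned down separately via Proposition~\ref{prop:udiag}). Substituting the ansatz $\gamma_j = \rho_j\gamma_1 + \eta_j$ into the scalar recurrence and matching the coefficients of $\gamma_1$ and the constant part separately decouples the single recurrence into the two stated recurrences for $\rho_j$ and $\eta_j$.

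Finally, I would verify the initial data. Consistency at $j=1$ with $\gamma_1 = \rho_1\gamma_1 + \eta_1$ forces $\rho_1 = 1$ and $\eta_1 = 0$; evaluating the recurrence at $j=2$ with $\gamma_0 = 1$ gives $\gamma_2 = (b^w_1\gamma_1 - b^u_0)/b^d_2$, which identifies $\rho_2 = b^w_1/b^d_2$ and $\eta_2 = -b^u_0/b^d_2$. A short induction on $j$ using the decomposition just obtained then propagates \eqqref{eq:gam} to every $j\geq 3$. The only delicate point is the division by $b^d_j$, which requires $b^d_j > 0$; this is the hypothesis explicitly flagged in the proposition, and the degenerate case $b^d_j = 0$ is deferred to Proposition~\ref{prop:gamma1}. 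Beyond keeping the index shifts straight, the argument is essentially algebraic bookkeeping, so I do not anticipate any real obstacle.
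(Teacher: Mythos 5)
Your proposal is correct and follows essentially the same route as the paper: both extract the three-term recurrence $b^u_{j-1}c(0,j-1)-b^w_j c(0,j)+b^d_{j+1}c(0,j+1)=0$ from the top row of $CB=I$ (equivalently $C_0B=\delta$ without its first equation), normalize by $c(0,0)$, split the resulting affine dependence on $\gamma_1$ into the $\rho_j$ and $\eta_j$ recurrences, and close with induction. Your version is in fact slightly tidier, since you argue directly from the second-order structure with $\gamma_0=1$ fixed and $\gamma_1$ free rather than invoking the paper's ergodicity remark, and you correctly obtain $\eta_2=-b^u_0/b^d_2$ where the paper's proof text contains a sign/index slip.
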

\begin{proof}
Equation~(\ref{eq:gam}) follows by  the  systems of  equations $C_0 B=\delta$, without considering the first equality. 
It is easy to see that every equation has the form below, where $j>0$:
\begin{equation}\label{eq:s-gamma}
b_{j-1}^{u} c(0,j-1) -b_{j}^{w}  c(0,j) + b_{j+1}^{d} c(0,j+1)= 0.
\end{equation}

The combination of this recursive structure and the ergodicity assumption (that ensures that $\lim_{j\rightarrow\infty} c(0,j)=0$ for all $j$), allows us to express all elements $c(0,j)$  in terms of their preceding elements. Thus by substitution, we conclude that every element is a product of $c(0,0)$ and a constant, depending on  $j$ that we denote as $\gamma_{j}.$

Eq.~(\ref{eq:gam}) uses Eq.~(\ref{eq:defgamma}) and follows from the observation that:
$$
\gamma_{2}=\frac{b^{w}_{1}\gamma_{1}-b_{0}^{u}}{b^{d}_{2}},
$$
and in general for $j\geq 3$:
\begin{equation}\label{eq:GamRecur}
\gamma_{j}=\frac{b^{w}_{j-1}\gamma_{j-1}-b_{j-2}^{u}\gamma_{j-2}}{b^{d}_{j}}.
\end{equation}

It is clear that $\gamma_{2}$ is of the form described in the Proposition (since $\rho_{2}=b_{1}^{w}/b_{2}^{d}$ and $\eta_{2}=b_{0}^{u}/b_{1}^{d}$) and by substituting, $\gamma_{3}$ has this structure as well. An induction argument completes  the proof.
\end{proof}

Proposition \ref{prop:udiag} provides a method to calculate $\gamma_{1},$ using the expressions derived above.
\begin{proposition}\label{prop:udiag}
The scalar $\gamma_{1}$ can be calculated algebraically as follows:
\begin{equation}\label{eq:gammai} 
\gamma_{1} 
=\frac{b_{0}^{u}-\sum_{j=1}^{\infty}b_{j}^{z}\eta_{j}}
{b^{d}_{1}+\sum_{j=1}^{\infty}b_{j}^{z}\rho_{j}}.\end{equation}
%
\end{proposition}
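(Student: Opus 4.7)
The plan is to exploit the one equation from the linear system $C_0 B = \delta$ that was \emph{not} used in the proof of Proposition~\ref{prop:rdiag}. Recall that Proposition~\ref{prop:rdiag} only invokes the coordinate equations indexed by $j \ge 1$ (the three-term recursion in the interior of the matrix), leaving the zeroth equation free. This equation reads
\begin{equation*}
c(0,0)\bigl(-b_0^d - b_0^u\bigr) + c(0,1)\bigl(b_1^d + b_1^z\bigr) + \sum_{j=2}^{\infty} c(0,j)\, b_j^z = 1,
\end{equation*}
and it is exactly the relation that pins down the one free parameter $\gamma_1$ left over from the recursion~(\ref{eq:GamRecur}).

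Next I would substitute the known expressions for the $c(0,j)$. By Proposition~\ref{prop:firstcol}, $c(0,0) = -1/b_0^d$, and by Proposition~\ref{prop:rdiag}, $c(0,j) = \gamma_j c(0,0)$ with $\gamma_j = \rho_j \gamma_1 + \eta_j$ (and $\rho_1 = 1$, $\eta_1 = 0$). Dividing through by $c(0,0)$ and rearranging the $-b_0^d - b_0^u$ term against $-b_0^d$ that appears on the right after multiplying, the equation simplifies to
\begin{equation*}
-b_0^u + \gamma_1 b_1^d + \sum_{j=1}^{\infty} \gamma_j\, b_j^z = 0,
\end{equation*}
after which substituting $\gamma_j = \rho_j \gamma_1 + \eta_j$ and collecting the $\gamma_1$ terms yields exactly formula~(\ref{eq:gammai}).

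The main obstacle is analytic rather than algebraic: the sums $\sum_{j=1}^{\infty} \rho_j b_j^z$ and $\sum_{j=1}^{\infty} \eta_j b_j^z$ must be shown to converge, and the denominator $b_1^d + \sum_{j=1}^{\infty} \rho_j b_j^z$ must be shown to be nonzero, before we are entitled to solve for $\gamma_1$. Convergence ultimately relies on the fact, implicit in the surrounding discussion, that $c(0,j) \to 0$ as $j \to \infty$ (this is the ergodicity-type condition already invoked in Proposition~\ref{prop:rdiag}); once convergence of $\sum_j \gamma_j b_j^z$ is established, splitting the sum along $\gamma_j = \rho_j \gamma_1 + \eta_j$ is justified provided the two pieces converge separately, which follows from the recursions~(\ref{eq:rho})--(\ref{eq:eta}) and the row-sum condition~(\ref{eq:rowsum}). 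Nonvanishing of the denominator follows because each $\rho_j$ and $b_j^z$ is nonnegative and $b_1^d > 0$ by assumption, so strict positivity is preserved.

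In short, the proof is a one-line algebraic manipulation of the hitherto-unused zeroth equation of $C_0 B = \delta$, dressed up with an argument that the infinite series appearing in the numerator and denominator converge absolutely and that the denominator does not vanish.
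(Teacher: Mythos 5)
Your proposal is correct and follows exactly the paper's route: the paper's proof is the single line ``consider the equation $C_{0}B'_{0}=1$; the result follows immediately,'' which is precisely the unused zeroth-column equation of $C_{0}B=\delta$ that you identify and then expand. Your additional remarks on convergence of the series and positivity of the denominator (via nonnegativity of the $\rho_{j}$, which does hold but requires a short induction) go beyond what the paper records, but the core argument is the same.
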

\begin{proof}
To verify this expression for $\gamma_{1}$, we consider the equation $C_{0}B'_{0}=1.$ The result follows immediately.
\end{proof}


The next proposition provides a method of computing the under diagonal elements ($c(i,j),$ with $j=1,2,\ldots$ and $i=j,j+1,\ldots$) of $C$. 
We denote $\delta_{i,j}$ as a scalar that takes the value $1$ if $i=j$, and $0$ otherwise.
\begin{proposition}\label{prop:ldiag} Assume that  $b^{u}_{i}>0 , $ for all $i=0,1,\ldots .$ 
The following is true for all elements $c(i,j)$ with $i\ge j\ge 1$.
\begin{equation}\label{eq:ci}
c(i,j) =
\begin{dcases}
 -\gamma_{1}(1/b^{d}_{0} + 1/b^{u}_{0}),&\text{for }i=j=1,\\
\frac{1}{b^{u}_{i-1}}(-b^{z}_{i-1} c(0,j) - b^{d}_{i-1} c(i-2,j) +b^{w}_{i-1} c(i-1,j) + \delta_{i-1,j}),&\text{otherwise.}
\end{dcases}
\end{equation}
\end{proposition}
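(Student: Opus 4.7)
The plan is to derive the formula by reading off the appropriate entries of the matrix identity $BC = I$. Specifically, since the $(i-1, j)$-entry of $BC$ must equal $\delta_{i-1, j}$, the equation $B_{i-1} C'_j = \delta_{i-1, j}$ provides a single linear relation in which $c(i, j)$ is the only element of $C'_j$ not yet computed at the relevant stage of Algorithm~\ref{Alg:Invd}. Solving that relation for $c(i, j)$ will yield the stated formula. The hypothesis $b^{u}_{i-1} > 0$ is exactly what lets us divide and isolate $c(i, j)$.

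For the generic case $i \geq 2$ (and $i \geq j \geq 1$), I would read off row $i-1$ of $B$ from \eqqref{eq:b}: it has nonzero entries $b^{z}_{i-1}$ in column $0$, $b^{d}_{i-1}$ in column $i-2$, $-b^{w}_{i-1}$ in column $i-1$, and $b^{u}_{i-1}$ in column $i$. Multiplying by $C'_j$ gives
\begin{equation*}
b^{z}_{i-1} c(0, j) + b^{d}_{i-1} c(i-2, j) - b^{w}_{i-1} c(i-1, j) + b^{u}_{i-1} c(i, j) = \delta_{i-1, j},
\end{equation*}
and solving for $c(i, j)$ produces exactly the second branch of \eqqref{eq:ci}. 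The mild subtlety to check is the boundary $i = 2$: there columns $0$ and $i-2$ coincide, and the coefficient of $c(0, j)$ is $b^{z}_{1} + b^{d}_{1}$, which agrees with the actual entry shown in row $1$ of \eqqref{eq:b}. By the algorithm's order of computation, $c(0, j)$, $c(i-2, j)$, and $c(i-1, j)$ have all been produced in earlier stages, so the right-hand side is known.

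For the special case $i = j = 1$, row $0$ of $B$ has a different shape: only $-b^{d}_0 - b^{u}_0$ on the diagonal and $b^{u}_0$ immediately to the right. The identity $(BC)_{0, 1} = 0$ therefore reduces to
\begin{equation*}
(-b^{d}_0 - b^{u}_0) c(0, 1) + b^{u}_0 c(1, 1) = 0,
\end{equation*}
so $c(1, 1) = (b^{d}_0 + b^{u}_0) c(0, 1) / b^{u}_0$. Substituting $c(0, 1) = \gamma_1 c(0, 0)$ from \eqqref{eq:defgamma} together with $c(0, 0) = -1/b^{d}_0$ from Proposition~\ref{prop:firstcol} gives $c(1, 1) = -\gamma_1 (b^{d}_0 + b^{u}_0)/(b^{u}_0 b^{d}_0) = -\gamma_1 (1/b^{d}_0 + 1/b^{u}_0)$, which is the first branch of \eqqref{eq:ci}.

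The main obstacle is essentially bookkeeping: one must match the formula's indices against the boundary rows of $B$ and verify that the algorithm has indeed supplied every quantity on the right-hand side at the moment \eqqref{eq:ci} is invoked. Once the row structures of $B$ in rows $0$, $1$, and $i-1 \geq 2$ are separated, each case is a one-line rearrangement. No delicate analytic argument is needed here beyond the positivity assumption $b^{u}_{i-1} > 0$ used to divide.
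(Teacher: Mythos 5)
Your proposal is correct and follows essentially the same route as the paper's own proof: the generic case from $B_{i-1}C'_j=\delta_{i-1,j}$ and the case $i=j=1$ from $B_0C'_1=0$ combined with $c(0,1)=\gamma_1 c(0,0)=-\gamma_1/b^d_0$. Your extra check that at $i=2$ the coalesced column-$0$ coefficient $b^z_1+b^d_1$ still matches the formula is a detail the paper leaves implicit, but it does not change the argument.
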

\begin{proof}
To compute $c(1,1)$ we use the   equation $B_0 C'_1 =0,$ \ie
$$
-(b^{u}_{0} +b^{d}_{0}) c(0,1) +b^{u}_{0} c(1,1) = 0,
$$
and the statement is complete since by Proposition~\ref{prop:rdiag} we 
have  $c(0,1) =\gamma_{1} c(0,0)=-\gamma_{1}/b_{0}^{d}$.

%
%

To compute the remaining elements $c(i,j)$ of $C$, we use  $B_{i-1}C'_{j}=\delta_{i-1,j}$. Indeed,   the product of the $(i-1)^{th}$ row of $B$ (where $i\geq 2)$ and the  $j^{th}$ column  of  $C$ (where $j\geq 1$) is the lefthand side of the equation below and the proof is complete.
$$
b_{i-1}^{z} c(0,j) + b_{i-1}^{d} c(i-2,j) -b_{i-1}^{w} c(i-1,j)+b_{i-1}^{u}c(i,j)=\delta_{i-1,j}.$$
\end{proof}

 \begin{proposition}\label{prop:ldiagd} The following is true for $c(i,j)$ with $j> i \ge 1$ and $b_{j}^{d}>0.$
 \begin{equation}\label{eq:cid}
 c(i,j)=\frac{b_{j-1}^{w}  c(i,j-1)-b_{j-2}^{u} c(i,j-2)+\delta_{i+1,j}}{b_{j}^{d}}.
\end{equation}
\end{proposition}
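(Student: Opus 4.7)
The plan is to exploit the identity $CB = I$, which has already been used in the same spirit in Propositions~\ref{prop:rdiag} and \ref{prop:udiag}. Specifically, I would consider the $(i, j-1)$ entry of the product $CB$, i.e.\ the dot product of row $i$ of $C$ with column $j-1$ of $B$, and solve the resulting scalar equation for $c(i,j)$.

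First I would read off the structure of column $j-1$ of $B$ from Eq.~(\ref{eq:b}). Because the ``$z$'' entries all sit in column $0$, for every $j-1 \ge 1$ the column $B'_{j-1}$ has at most three nonzero entries, namely $b_{j-2}^u$ in row $j-2$, $-b_{j-1}^w$ in row $j-1$, and $b_j^d$ in row $j$. (I would briefly check the boundary $j = 2$: column $1$ has entries $b_0^u,\,-b_1^w,\,b_2^d$ in rows $0,1,2$, matching the same pattern.) Consequently, $C_i B'_{j-1} = \delta_{i, j-1}$ collapses to the single scalar relation
$$b_{j-2}^u\, c(i, j-2) \;-\; b_{j-1}^w\, c(i, j-1) \;+\; b_j^d\, c(i, j) \;=\; \delta_{i, j-1}.$$
Since $b_j^d > 0$ by hypothesis, I can divide through and solve for $c(i,j)$. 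Using the trivial identity $\delta_{i, j-1} = \delta_{i+1, j}$, this yields precisely Eq.~(\ref{eq:cid}).

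The only conceptual point beyond this one-line manipulation is \emph{legitimacy of the recursion} in the flow of Algorithm~\ref{Alg:Invd}: for $j > i$ we are stepping rightward along row $i$ after its diagonal entry $c(i,i)$ has been determined (via Proposition~\ref{prop:ldiag}), so at the moment we need $c(i,j)$ the predecessors $c(i,j-2)$ and $c(i,j-1)$ are already in hand, and no circularity occurs. I anticipate no real obstacle: the main content of the proof is simply the observation that the sparsity pattern of $B'_{j-1}$ is the same as that of $B'_{j-1}$ in the birth--death part of the matrix, so the ``$z$'' entries never enter and the three-term recurrence closes immediately.
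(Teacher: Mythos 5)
Your proposal is correct and matches the paper's own argument: the paper likewise derives Eq.~(\ref{eq:cid}) from $C_iB=\delta_i$, obtaining the same three-term relation $b_{j-2}^{u}c(i,j-2)-b_{j-1}^{w}c(i,j-1)+b_{j}^{d}c(i,j)=\delta_{i+1,j}$ and dividing by $b_j^d$. Your version simply spells out the sparsity of column $B'_{j-1}$ and the availability of the predecessors in the recursion, which the paper leaves implicit.
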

\begin{proof}
Equation~(\ref{eq:cid}) follows from  the  systems of  equations $C_i B=\delta_{i}$, without considering the first $i-1$ equalities. 
Every equation has the form below, where $j>0$:
\begin{equation*}
b_{j-2}^{u} c(i,j-2) -b_{j-1}^{w}  c(i,j-1) + b_{j}^{d} c(i,j)= \delta_{i+1,j}.
\end{equation*}
\end{proof}

\subsubsection{Zeros above and below the diagonal}
In this section we discuss how Algorithm~\ref{Alg:Invd} can be extended to allow $b^{d}_{i}=0$ and $b^{u}_{i}=0$ to be zero. The elements $b^{z}_{i}=0$ can always be zero and do not influence the procedure of the algorithm.
When $b_i^d=0$  for some $i$,
   Eq.~(\ref{eq:GamRecur}) can not be used  to compute the corresponding  $\gamma_i$  because of the devision by $0$ that occurs. 
However, we next show that  under the conditions in \eqqrefs{eq:rowsum}-(\ref{eq:bod}) the  matrix $C$ still exists and this inverse can be computed by 
Algorithm \ref{Alg:Invd} with  a small modification to compute    $\gamma_{i} $  in the following way.

Let  $I_0=\{i_k,$ $k=1,\ldots,\nu, \  \text{such that} \ b_{i_k}^{d}=0 \}$ and let $i_0=1$ and $i_{\nu+1}=\infty.$

\begin{proposition}\label{prop:gamma1}
Assume that $I_{0}$ is not empty, then for $k=0,1,2,\ldots,\nu-1$: 
\begin{equation}\label{eq:gamma1}
\gamma_{i_k}=
\begin{dcases}
\frac{b^{w}_{i_{k+1}-1}\eta_{i_{k+1}-1}-b^{u}_{i_{k+1}-2}\eta_{i_{k+1}-2}}{-b^{w}_{i_{k+1}-1}\rho_{i_{k+1}-1}+b^{u}_{i_{k+1}-2}\rho_{i_{k+1}-2}},&\text{if } i_{k+1}\neq i_{k}+1,\\
\frac{b^{u}_{i_{k}-1}\gamma_{i_{k}-1}}{b^{w}_{i_{k}}},& \text{if } i_{k+1}=i_{k}+1.
\end{dcases}
\end{equation}
 
For $i_\nu$:
\begin{equation}\label{eq:gammanu}
\gamma_{i_\nu}=\frac{b_{0}^{u}-\sum_{j=1}^{i_\nu-1}b_{j}^{z}\gamma_{i}-b^{d}_{1}\gamma_{1}-\sum_{j=i_\nu}^{\infty}b_{j}^{z}\eta_{j}}
{\sum_{j=i_\nu}^{\infty}b_{j}^{z}\rho_{j}}.
\end{equation}
The remaining components of $\gamma$ are constructed for $j\in \{i_{k}+1,\ldots,i_{k+1}-1\}$ as: 
\begin{equation*}
\gamma_{j}=\rho_{j}\gamma_{i_{k}}+\eta_{j}.
\end{equation*}
where the vectors $\rho$ and $\eta$ are as before in \eqqref{eq:rho} and \eqqref{eq:eta}, specified below, with $j=1,2,\ldots.$
$$
\rho_{j}=
\begin{dcases}
1,&\text{if }b^{d}_{j}=0 \text{ or if } j=1,\\
\frac{b^{w}_{j-1}}{b^{d}_{j}},&\text{if } b^{d}_{j}\neq 0 \text{ and } b^{d}_{j-1}=0,\text{ or if } j=2,\\
\frac{b_{j-1}^{w}\rho_{j-1}-b_{j-2}^{u}\rho_{j-2}}{b_{j}^{d}},
&\text{if } b^{d}_{j}\neq 0 \text{ and } b^{d}_{j-1}\neq 0,\ j\ge 3,\\
\end{dcases}
$$
and:
$$
\eta_{j}=
\begin{dcases}
0,&\text{if }b^{d}_{j}=0 \text{ or if } j=1,\\
\frac{-b^{u}_{j-2}\gamma_{j-2}}{b^{d}_{j}},&\text{if } b^{d}_{j}\neq 0 \text{ and } b^{d}_{j-1}=0, \text{ or if } j=2,\\
\frac{b_{j-1}^{w}\eta_{j-1}-b_{j-2}^{u}\eta_{j-2}}{b_{j}^{d}},&\text{if } b^{d}_{j}\neq 0 \text{ and } b^{d}_{j-1}\neq 0,\ j\ge 3,\\
\end{dcases}
$$
where $\gamma_{0}=1.$
\end{proposition}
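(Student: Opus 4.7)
The plan is to mirror the proof of Proposition~\ref{prop:rdiag} but partition the index set $\{1,2,\ldots\}$ into segments $S_k:=\{i_k,i_k+1,\ldots,i_{k+1}-1\}$ separated by the indices $i_k\in I_0$ at which the standard recursion breaks down (because $b_{i_k}^d=0$). Inside each $S_k$ the ``free parameter'' is $\gamma_{i_k}$: for $j\in S_k$ we will show that $\gamma_j$ admits the affine representation $\gamma_j=\rho_j\gamma_{i_k}+\eta_j$, and then use either the defect equation at $j=i_{k+1}$ (for $k<\nu$) or the boundary equation $C_0B'_0=1$ (for $k=\nu$) to pin down $\gamma_{i_k}$ numerically.

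First I would justify the stated recursions for $\rho_j,\eta_j$ by splitting into three cases tied to the location of $j$ inside its segment. When $j=i_k$ (the start of a segment), the identification $\gamma_{i_k}=\rho_{i_k}\gamma_{i_k}+\eta_{i_k}$ forces $\rho_{i_k}=1,\ \eta_{i_k}=0$, explaining the first case. When $j=i_k+1$ (so $b_{j-1}^d=0$ and $\gamma_{j-1}=\gamma_{i_k}$ is free while $\gamma_{j-2}=\gamma_{i_k-1}$ is a number already computed from the previous segment), applying \eqqref{eq:GamRecur} gives $\rho_j=b_{j-1}^w/b_j^d$ and $\eta_j=-b_{j-2}^u\gamma_{j-2}/b_j^d$, matching the second case. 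For all other $j\in S_k$ (both $b_j^d$ and $b_{j-1}^d$ nonzero), \eqqref{eq:GamRecur} is linear in $\gamma_{i_k}$ and decomposes termwise into the recursions for $\rho_j$ and $\eta_j$ stated in the proposition. A short induction on $j$ within each segment closes this part.

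Next I would derive Eq.~(\ref{eq:gamma1}). At the right endpoint $j=i_{k+1}$ with $b_{j}^d=0$, \eqqref{eq:s-gamma} (equivalently, the row of $C_0B=\delta$ at column $j$) collapses to the linear constraint $b_{i_{k+1}-1}^w\gamma_{i_{k+1}-1}=b_{i_{k+1}-2}^u\gamma_{i_{k+1}-2}$. If $i_{k+1}>i_k+1$, both $\gamma_{i_{k+1}-1}$ and $\gamma_{i_{k+1}-2}$ belong to $S_k$, so substituting their affine forms and solving for $\gamma_{i_k}$ yields the first branch of \eqqref{eq:gamma1} after multiplying numerator and denominator by $-1$. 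If $i_{k+1}=i_k+1$, then $\gamma_{i_{k+1}-1}=\gamma_{i_k}$ itself while $\gamma_{i_{k+1}-2}=\gamma_{i_k-1}$ is already a known constant from segment $S_{k-1}$, and solving directly gives the second branch. For the terminal parameter $\gamma_{i_\nu}$ there is no further defect index, so I would reuse the argument of Proposition~\ref{prop:udiag}: the equation $C_0B'_0=1$ reads $b_1^d\gamma_1+\sum_{j=1}^{\infty}b_j^z\gamma_j=b_0^u$, split the $\gamma$-sum at $j=i_\nu$, substitute $\gamma_j=\rho_j\gamma_{i_\nu}+\eta_j$ for $j\ge i_\nu$, and solve for $\gamma_{i_\nu}$, which reproduces \eqqref{eq:gammanu}.

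The main obstacle is well-posedness rather than the algebra: I must argue that each denominator appearing in \eqqref{eq:gamma1} and \eqqref{eq:gammanu} is nonzero, for otherwise the defect equation would be inconsistent with the summability/ergodicity condition $\gamma_j\to 0$. The cleanest way is to observe that a zero denominator would force the defect equation to have either no solution or a one-parameter family of solutions, neither of which is compatible with the already-established existence and uniqueness of the inverse $C$ under assumptions \eqqrefs{eq:rowsum}--(\ref{eq:bod}); thus the denominators are automatically nonzero. A secondary nuisance is bookkeeping the boundary cases $k=0$ (where $\gamma_{i_0}=\gamma_1$ is the genuine first free parameter and $\gamma_0=1$ must be used in $\eta_2$) and $\nu=0$ (no defect indices exist, so the whole proposition reduces to Proposition~\ref{prop:udiag}); both are handled by the same induction once one verifies the initial values $\rho_1=1,\eta_1=0,\rho_2=b_1^w/b_2^d,\eta_2=-b_0^u/b_2^d$ are recovered by the general formulas.
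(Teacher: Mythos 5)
Your proposal is correct and follows essentially the same route as the paper: within each segment between consecutive defect indices the equation at the column preceding $i_{k+1}$ collapses to $b^{u}_{i_{k+1}-2}\gamma_{i_{k+1}-2}=b^{w}_{i_{k+1}-1}\gamma_{i_{k+1}-1}$, the affine forms $\gamma_j=\rho_j\gamma_{i_k}+\eta_j$ are substituted to solve for the segment's free parameter, the last parameter $\gamma_{i_\nu}$ is pinned down by $C_0B'_0=1$ as in Proposition~\ref{prop:udiag}, and the process is repeated segment by segment. Your added remarks on nonvanishing denominators and the boundary cases go beyond what the paper writes down but do not change the argument.
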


\begin{proof}
We show how to compute 
$\gamma_1, \gamma_2, \ldots \gamma_{i_1}$. Consider the   steady state equations given in \eqqref{eq:s-gamma}: 
only the equation corresponding to $i=i_1-1$ changes to
\begin{equation*}
 b^{u}_{i_1-2}\gamma_{i_1-2}=b^{w}_{i_1-1}\gamma_{i_1-1}.
\end{equation*}
 Since there is no change in the formula for $i<i_1-1$, \eqqrefs{eq:gam} hold for  $i<i_1-1$, with $\rho $ and $\eta$ defined as above, from which we obtain the equations below (for $i=i_1-1$ and $i=i_1-2$ respectively): 
\begin{equation*}
\gamma_{i_1-1}=\rho_{i_1-1}\gamma_{1}+\eta_{i_1-1}
\end{equation*}
and similarly for $\gamma_{i_1-2}$:
\begin{equation*}
\gamma_{i_1-2}=\rho_{i_1-2}\gamma_{1}+\eta_{i_1-2}.
\end{equation*}
Combining the three equations above, we find the  expression of \eqqref{eq:gamma1} for $\gamma_{1}.$

\eqqref{eq:gammanu} follows as before from $C_0B'_0=1 ,$ using the computed $\gamma_{i}$ by \eqqref{eq:gamma1}, $i=1,\ldots,i_{\nu}-1$.

To find the subsequent values of $\gamma_{i}$ we  repeat this  process starting at the beginning of the 
next segment of the set $\{ \gamma_{i_1+1},\ldots, \gamma_{i_2} \} .$ The proof is complete noting that 
the case $\nu=\infty$ corresponds to  an infinite set 
 $\{ \gamma_{i_\nu-1},\ldots  \}$ on which the approach of the algorithm is applied. 
\end{proof}

In the sequel when we refer to Algorithm \ref{Alg:Invd} we include the possible modification applied per Proposition  \ref{prop:gamma1}.

What adjustments need to be made to compute the remaining elements when a division by zero is required for the algorithm (i.e.~$b^{u}_{i-1}=0$ in Eq.~(\ref{eq:ci})), are very similar to the procedure described in Proposition~\ref{prop:gamma1}. We `shift' away from this equation and use the next equations, to express following elements in terms of the one that was not computable because of the undefined division. This procedure happens horizontally for upper diagonal elements when $b^{d}_{i}=0$, and vertically for under diagonal elements when $b^{u}_{j}=0.$
 
%
%
%
%
%
%

Note that when $b^{u}_{i}=0$ then $c(k,l)=0$ for $1\le k \le i$ and $l \ge i+1.$
\subsubsection{Multiple nonzero columns}
One way to enlarge the class of matrices for which the method above can be applied is by allowing other columns to be nonzero for every element. Matrix $B$ will now have the following form:

\begin{equation}\label{eq:b2col}
B= \left[\begin{array}{rccccc}
-b^d_0-b^u_0&b^u_0+b^{z_{2}}_{0}&0&0&0&\cdots\\ 
b^d_1+b^{z_{1}}_1&-b^w_1+b^{z_{2}}_{1}&b^u_1&0&0&\cdots\\ 
b^{z_{1}}_2&b^d_2+b^{z_{2}}_{2}&-b^w_2&b^u_2&0&\ddots\\
b^{z_{1}}_3&b^{z_{2}}_{3}&b^d_3&-b^w_3&b^u_3&\ddots\\
b^{z_{1}}_4&b^{z_{2}}_{4}&0& b^d_4&-b^w_4&\ddots\\
\vdots&\vdots&\ddots&\ddots&\ddots&\ddots
\end{array}\right].
\end{equation}
As before, the row sum is zero for every row, i.e. for all $i$:
$$b_{i}^{z_{1}}+b_{i}^{z_{2}}+b_{i}^{d}+b_{i}^{u} =b_{i}^{w}.$$

Algorithm \ref{Alg:Invd} can still be applied, the only adjustments that need to be made are briefly described below. We will leave the details to the reader.
The main difference with the original algorithm is that in this case, we express all elements of the top row in terms of the top row element corresponding to the nonzero column with the highest index.

As long as the number of nonzero remains small, the order of the algorithm remains the same, only the normalization takes some extra steps. When the number of nonzero columns becomes countable, the algorithm loses its computational advantage and other methods might be preferable.

Another approach is to use the Toeplitz equation, cf.~\eqqref{eq:BinvW}, when the second column is a multiple of the first column. This is a necessary condition to use this equation.
 
 \subsection{The Non Homogenous - Finite Dimension Case}
The case in which  $\ell+1 <\infty$, may happen due to modeling or  truncation. 
 No adjustments are needed in the formulation of Algorithm \ref{Alg:Invd}, since the only difference is the altered condition at the boundary:
 $$ b^w_\ell=b^d_\ell+b^z_\ell ,$$
i.e.,  the row sum of the final row is zero. The formulas   of Propositions 2 - \ref{prop:gamma1} are adjusted so that the corresponding summations are up to $\ell $ where $\ell$ is finite.

 \subsection{The   Homogenous - Infinite Dimension Case}

In this paragraph we define the following subclass of matrices $B$ that possess  the structure described in Eq.~(\ref{eq:b}) and 
the  additional structure of the definition below.
\begin{definition}\label{def:eh}
$B$ is called  {\sl element homogeneous} when the following relations hold
$$b^{d}_{i}=b^{d}, b^{u}_{i}=b^{u}, b^{z}_{i}=b^{z} \mbox{\ for all $i$.} $$
\end{definition}
 
We note that when   $B$  is {\sl element homogeneous}  then necessarily the following also holds: $b^w_{i}=b^w$, for all $i.$

In the rest of this section we denote the constant $\gamma_{1}$ as $\gamma$ and derive the following result. We assume that none of the elements given are zero to avoid trivialities and more tedious notation. First we derive the following result regarding the scalar $\gamma$.
 
\begin{proposition}\label{prop:hom}
When $B$ has a homogenous structure and the size $\ell$ is countable, the following is true for all $j\ge 1$:
\begin{equation}\label{eq:gammahi}\gamma_{j}=\gamma^{j},
\end{equation}
where
\begin{equation}\label{eq:gammah}
\gamma=\frac{b^{w}-\sqrt{(b^{w})^{2}-4b^{u}b^{d}
}}{2b^{d}}.
\end{equation}
\end{proposition}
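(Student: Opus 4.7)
The plan is to observe that in the element-homogeneous setting the recursion \eqqref{eq:GamRecur} from Proposition~\ref{prop:rdiag} collapses to a second-order linear recurrence with \emph{constant} coefficients, and then apply the standard characteristic-equation method combined with the decay condition $\gamma_j\to 0$ already exploited in the proof of Proposition~\ref{prop:rdiag}.

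First, I would specialise \eqqref{eq:GamRecur} to obtain
$$b^d\gamma_j - b^w\gamma_{j-1} + b^u\gamma_{j-2}=0,\qquad j\ge 2,$$
together with the natural initial value $\gamma_0=1$ (since $c(0,0)/c(0,0)=1$). The characteristic polynomial is $p(z)=b^d z^2-b^w z+b^u$, whose discriminant $(b^w)^2-4b^u b^d$ is nonnegative: indeed, by \eqqref{eq:rowsum} one has $b^w=b^z+b^u+b^d\ge b^u+b^d\ge 2\sqrt{b^u b^d}$ by AM-GM. Hence $p$ has two real roots
$$\gamma_\pm=\frac{b^w\pm\sqrt{(b^w)^2-4b^u b^d}}{2b^d},$$
and the $\gamma$ of \eqqref{eq:gammah} is precisely the smaller root $\gamma_-$.

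Next, the general solution of the recurrence reads $\gamma_j=A\gamma_+^j+B\gamma_-^j$, and $\gamma_0=1$ forces $A+B=1$. To localise the roots relative to $1$ I would simply evaluate $p(1)=b^d-b^w+b^u=-b^z\le 0$; since $p$ opens upward this yields $\gamma_-\le 1\le \gamma_+$, with strict inequalities in the ergodic case $b^z>0$. Invoking $\lim_{j\to\infty}\gamma_j=0$ (the closure condition used in the proof of Proposition~\ref{prop:rdiag}, inherited from ergodicity) then forces $A=0$, hence $B=1$, and therefore $\gamma_j=\gamma^j$ for every $j\ge 0$.

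The main obstacle is the selection of the correct root: algebraically both $\gamma_+^j$ and $\gamma_-^j$ solve the recurrence with $\gamma_0=1$, and only the decay/summability condition singles out $\gamma_-$. A minor bookkeeping point is the repeated-root case $(b^w)^2=4b^u b^d$, which by the AM-GM chain above forces $b^z=0$ and $b^u=b^d$ and hence $\gamma_\pm=1$; the general solution then contains a linear-in-$j$ term and admits no nontrivial solution with $\gamma_j\to 0$, so this degenerate case is ruled out by the standing ergodicity assumption and can be dismissed in a single sentence.
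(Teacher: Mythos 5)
Your proposal is correct and follows essentially the same route as the paper: both reduce the three-term relation $b^{u}\gamma_{j-2}-b^{w}\gamma_{j-1}+b^{d}\gamma_{j}=0$ to the quadratic $b^{d}\gamma^{2}-b^{w}\gamma+b^{u}=0$ and identify $\gamma$ as the root lying in $(0,1]$. Your execution is in fact somewhat tighter than the paper's: where the paper simply asserts the product form and verifies $0<\gamma<1$ by an algebraic identity, you justify the geometric form from the general solution of the constant-coefficient recurrence with $\gamma_{0}=1$ and the decay condition (which is what actually rules out a $\gamma_{+}$ component), and your localisation via $p(1)=-b^{z}\le 0$ together with the AM--GM bound on the discriminant is a cleaner substitute for the paper's computation.
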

\begin{proof}
The proof follows by considering the  systems of  equations $C_i B'_{j}=0$ for $j=i+1,i+2,\ldots$. 
All equalities have the form below.\begin{equation*}
b^{u} c(i,j-1) -b^{w}  c(i,j) + b^{d} c(i,j+1)= 0.
\end{equation*}
Because of the homogenous structure, it is clear that the corresponding elements of the row $C_{0}$ have a product form with a constant  factor $\gamma$, e.g. for $j=i:$
$$
b^{u} c(i,j) -b^{w}  \gamma c(i,j) + b^{d} \gamma^{2}c(i,j)= 0,
$$
thus in other words:
\begin{equation}\label{eq:gamchom}
c(i,j)=\gamma^{j}c(i,i).
\end{equation}
When, solving  for $\gamma$ the above simplifies to 
$b^{d} \gamma^2-b^{w}\gamma+b^{u}=0$. This quadratic equation has  
 a unique solution between $0$ and $1$ given by \eqqref{eq:gammah}. This solution is greater than $0$ since $\sqrt{(b^{w})^{2}-4b^{u}b^{d}}<\sqrt{(b^{w})^{2}}=b^{w}$. This $\gamma$ is smaller than $1$ since:
 \begin{align*}
\gamma=\frac{b^{w}-\sqrt{(b^{w})^{2}-4b^{u}b^{d}}}{2b^{d}}&<\frac{b^{w}\sqrt{(b^{w})^{2}-4b^{u}b^{d}-4b^{z}b^{d}}}{2b^{d}}\\
&=\frac{b^{w}-\sqrt{(b^{w})^{2}+4(b^{d})^{2}-4b^{w}b^{d}}}{2b^{d}}\\
&=\frac{b^{w}-\sqrt{(b^{w}-2b^{d})^{2}}}{2b^{d}}\\
&=\frac{2b^{d}}{2b^{d}}=1.
\end{align*}
\end{proof}

When $B$ is element homogenous, we do not need Proposition \ref{prop:udiag} to compute $C,$ since $\gamma_{1}=\gamma$ and computable by Proposition \ref{prop:hom}. Proposition \ref{prop:ldiag} remains the same when $B$ is element homogenous.
In addition, the expression for the computation of the under diagonal elements can also be simplified. Using both expressions, we can even directly express the diagonal elements in terms of its predecessor.

The above leads to the following considerable  simplification of Algorithm~\ref{Alg:Invd}. 

\begin{algorithm}{Computation of  $C:=B^{-1}$ for an element homogeneous, countable, matrix $B.$}\label{Alg:HomInv}
\begin{itemize}
\item[]
At stage 1: 
\begin{enumerate}[label=\alph*)]
\item
Calculate $\gamma$ using Eq. (\ref{eq:gammah}).
\item
Calculate $\psi$ using \eqqref{eq:psi}.
\item
The remaining elements of the $0^{th}$  row of  $C$ are computed  using   Proposition \ref{prop:hom} and in particular \eqqref{eq:gamchom} in that proposition.
\end{enumerate}
\item[]
At stage $i= 1,2,\ldots,\ell$\ : 
\begin{enumerate}[label=\alph*)]
\item The diagonal element $c(i,i)$ is computed by \eqqref{eq:diagonal}.
\item The elements of $C_{i}$, the $i^{th}$ row of $C$ to the right of the diagonal  are computed using Proposition \ref{prop:hom} and in particular \eqqref{eq:gamchom} in that proposition.
\item The under diagonal elements of $C'_{i},$ the $i^{th}$ column of $C$ are computed using
Eq. (\ref{eq:cijunder}) of Proposition~\ref{prop:cijunder}.
\end{enumerate}
\end{itemize}
\end{algorithm}

To summarize, the elements $c(i,j)$ of $C$ can be calculated as follows:
\begin{equation}\label{eq:cijhom}
c(i,j) =\begin{dcases}
\frac{-1}{b^d }& \text {if} \ j= i=0, \\
\frac{-1+b_{u}((1-\psi) c(0,i-1) + \psi c(i-1,i-1))}{b_{w}-b_{d}\gamma}
 & \text {if} \ j = i\ge 1,\\
\gamma^{j-i} c(i,i) & \text {if} \ j\ge i+1, \\
(c(j,j)-c(0,j))\psi^{i-j}+c(0,j) & \text {if} \ j \le i-1.
\end{dcases}
\end{equation}
For a feasible way to compute all elements, we refer to the order in Algorithm~\ref{Alg:HomInv}.

Below we will prove that Algorithm \ref{Alg:HomInv} is correct. As part of this algorithm, the under diagonal elements of column $C'_{i}$ are expressed in terms of its first element $c(i,0)$ and its diagonal element $c(i,i)$. This calculation is given in the proposition below.

\begin{proposition}\label{prop:cijunder}
The under diagonal elements of $C$  can be calculated as follows:
\begin{equation}\label{eq:cijunder}
c(i,j)=(c(j,j)-c(0,j))\psi^{i-j}+c(0,j),
\end{equation}
where
\begin{equation}\label{eq:psi}
\psi=\frac{b^{w}-\sqrt{(b^{w})^{2}-4b^{u}b^{d}}}{2b^{u}}=\gamma b^{d}/b^{u},
\end{equation}
and where $\psi$ is between $0$ and $1$.
\end{proposition}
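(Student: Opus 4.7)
My plan is to exploit the homogenous structure and identify $\{c(i,j)\}_{i \ge j}$ as the (unique bounded) solution of a second-order linear recurrence in $i$. Fixing $j \ge 1$ and any $i > j$, the identity $(BC'_j)_i = 0$ reads
$$b^z c(0,j) + b^d c(i-1,j) - b^w c(i,j) + b^u c(i+1,j) = 0.$$
Using the row-sum identity $b^z + b^d + b^u = b^w$, I would first observe that the constant sequence $c(0,j)$ is a particular solution. The associated homogeneous recurrence $b^u x_{i+1} - b^w x_i + b^d x_{i-1} = 0$ has characteristic polynomial $b^u r^2 - b^w r + b^d = 0$, whose two roots I call $\psi$ and $\phi$. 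Rationalizing the numerator in the definition of $\gamma$ confirms the stated identity $\psi = \gamma b^d/b^u$, and a parallel manipulation gives $\phi = 1/\gamma > 1$; equivalently, Vieta yields $\psi\phi = b^d/b^u$ and $\psi + \phi = b^w/b^u$. The general solution of the column recurrence is therefore $c(i,j) = c(0,j) + A\psi^i + A'\phi^i$ for some scalars $A, A'$.

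Next, since $\phi > 1$, only the choice $A' = 0$ yields a sequence that remains bounded in $i$, which is the regime we are in by the same ergodicity-type rationale invoked in the proof of Proposition \ref{prop:rdiag}. Evaluating $c(i,j) = c(0,j) + A\psi^i$ at $i = j$ then fixes $A = (c(j,j) - c(0,j))\psi^{-j}$, which rearranges exactly to the claimed identity \eqqref{eq:cijunder}. To close out the proposition I would verify $0 < \psi < 1$ by mirroring the concluding chain of inequalities in the proof of Proposition \ref{prop:hom}: positivity is immediate from $\sqrt{(b^w)^2 - 4 b^u b^d} < b^w$, and $\psi < 1$ follows from $b^z > 0$ together with a squaring argument that turns $b^w - 2b^u < \sqrt{(b^w)^2 - 4 b^u b^d}$ into $b^u < b^w - b^d = b^u + b^z$.

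The step I expect to be the main obstacle is the elimination of the growing mode $\phi^i$. Rather than lean on an informal appeal to ergodicity, a cleaner alternative is a short induction on $i \ge j$ built on the recurrence in Proposition \ref{prop:ldiag}: the case $i = j$ is trivially the formula's own defining identity, the case $i = j+1$ is obtained from the column equation $(BC'_j)_j = 1$ after invoking $c(j-1,j) = \gamma\, c(j-1,j-1)$ from Proposition \ref{prop:hom}, and the general inductive step collapses, once the inductive hypothesis is substituted, to the characteristic identity $b^u \psi^2 = b^w \psi - b^d$. Either route delivers the same conclusion, so I would write the algebraic derivation in the body of the proof and flag the induction as a self-contained alternative.
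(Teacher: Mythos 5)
Your proposal is correct and follows essentially the same route as the paper: the paper likewise takes the column recurrence $b^{z}c(0,j)+b^{d}c(i-1,j)-b^{w}c(i,j)+b^{u}c(i+1,j)=0$, shifts by the particular solution via $d(i,j):=c(i,j)-c(0,j)$, and reads off $d(i,j)=\psi^{i-j}d(j,j)$ from the quadratic $b^{u}\psi^{2}-b^{w}\psi+b^{d}=0$. The only difference is that you make explicit what the paper leaves implicit --- identifying the second root as $1/\gamma>1$ and discarding it by boundedness (or by your inductive alternative), and spelling out the check that $0<\psi<1$ --- which is a welcome tightening rather than a change of method.
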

\begin{proof}
We consider the equation $B'_{i}C_{j}=0,$ with $i\ge j+1$:
$$
b^{z}c(0,j)+b^{d}c(i-1,j)-b^{w}c(i,j)+b^{u}c(i+1,j)=0.
$$
Secondly, we define $d(i,j):=c(i,j)-c(0,j)$ for all $i\ge j.$ Then it is easy to see that the above is the same as:
$$
b^{d}d(i-1,j)-b^{w}d(i,j)+b^{u}d(i+1,j)=0.
$$
Like in the proof of Proposition \ref{prop:hom} these elements $d(i,j)$ have a product form in $i$ with a constant  factor $\psi$, for all $i\ge j+1$:
$$
b^{d} d(i-1,j) -b^{w}  \psi d(i,j) + b^{u} \psi^{2}d(i+1,j)= 0,
$$
thus in other words:
$$
d(i,j)=\psi^{i-j}d(j,j).
$$
When solving  for $\psi$, this quadratic equation has  
 a unique solution between $0$ and $1$, given by \eqqref{eq:psi}. 
Next, substituting $d(i,j)=c(i,j)-c(0,j)$ we get:
$$
d(i,j)=c(i,j)-c(0,j)=\psi^{i-j}(c(j,j)-c(0,j)).
$$
and the proof is complete.
\end{proof}

Since $\psi$ and $\lambda$ are expressed explicitly, we can calculate the diagonal elements using Equation~\eqqref{eq:diagonal} below. This is true by isolating $c(i,i)$ in $C_{i}B'_{i}=1.$

\begin{equation}\label{eq:diagonal}
c(i,i)=(-1+b_{u}((1-\psi) c(0,i-1) + \psi c(i-1,i-1)))/(b_{w}-b_{d}.\gamma) 
\end{equation}

In the next proposition we derive an interesting result regarding the convergence of the diagonal elements, where we abbreviate $D:=b^{2}_{w}-4b_{u}b_{d}$. Note that $D>0.$

\begin{proposition}
The diagonal elements of C converge to:
$$
\lim_{i\rightarrow\infty} c(i,i)=\frac{-1}{\sqrt{D}}.
$$
\end{proposition}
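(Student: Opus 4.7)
My plan is to use the recursion for the diagonal elements given in \eqqref{eq:diagonal} and show it is a contraction whose forcing term converges, so its fixed point is the limit.

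First, I would rewrite \eqqref{eq:diagonal} in the form of a first-order linear recursion
\[
c(i,i) \;=\; \alpha\, c(i-1,i-1) \;+\; r_{i},
\]
where
\[
\alpha \;=\; \frac{b_{u}\psi}{b_{w}-b_{d}\gamma},
\qquad
r_{i} \;=\; \frac{-1 + b_{u}(1-\psi)\,c(0,i-1)}{b_{w}-b_{d}\gamma}.
\]
Using the explicit form $c(0,i-1) = -\gamma^{i-1}/b^{d}$ established earlier via Propositions \ref{prop:firstcol} and \ref{prop:hom}, and the fact that $\gamma \in (0,1)$, one sees immediately that $r_{i}\to r_{\infty}$ with $r_{\infty} = -1/(b_{w}-b_{d}\gamma)$ as $i\to\infty$.

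Next, the key algebraic simplification: using the definitions of $\gamma$ and $\psi$ in \eqqref{eq:gammah} and \eqqref{eq:psi}, we get
\[
b_{d}\gamma \;=\; \frac{b^{w}-\sqrt{D}}{2} \;=\; b_{u}\psi,
\]
so $b_{w}-b_{d}\gamma - b_{u}\psi = \sqrt{D}$, which is positive. This identity immediately yields $0<\alpha<1$, since both numerator $b_{u}\psi$ and denominator $b_{w}-b_{d}\gamma = \sqrt{D}+b_{u}\psi$ are positive and the denominator strictly exceeds the numerator.

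Finally, since $0<\alpha<1$ and $r_{i}\to r_{\infty}$, a standard argument for linear recursions (iterate the recursion and use $\sum_{k=0}^{\infty}\alpha^{k}$ converges) shows that $c(i,i)$ converges to the fixed point
\[
L \;=\; \frac{r_{\infty}}{1-\alpha} \;=\; \frac{-1}{b_{w}-b_{d}\gamma - b_{u}\psi} \;=\; \frac{-1}{\sqrt{D}},
\]
which is the claim. The only step with any subtlety is verifying the identity $b_{d}\gamma = b_{u}\psi = (b^{w}-\sqrt{D})/2$ cleanly, but this is a direct substitution from \eqqref{eq:gammah} and \eqqref{eq:psi}; everything else is routine.
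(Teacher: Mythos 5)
Your proof is correct, and it takes a genuinely different route from the paper's. The paper works from the \emph{second-order} relation obtained from $B_{i-1}C'_{i}=0$ (Eq.~(\ref{eq:ab-diag})), introduces the successive differences $v(i)=c(i,i)-c(i-1,i-1)$, shows $v(i)\to 0$ via a contraction with factor $\psi\gamma$, and only then extracts the limit from the fixed-point equation. You instead treat Eq.~(\ref{eq:diagonal}) (which comes from $C_iB'_i=1$) directly as a first-order affine recursion $c(i,i)=\alpha\,c(i-1,i-1)+r_i$ with $\alpha=b_u\psi/(b_w-b_d\gamma)$, verify $0<\alpha<1$ from the identity $b_d\gamma=b_u\psi=(b^w-\sqrt{D})/2$ (which checks out by direct substitution into Eqs.~(\ref{eq:gammah}) and (\ref{eq:psi}), and also gives $b_w-b_d\gamma-b_u\psi=\sqrt{D}>0$), note that the forcing term converges because $c(0,i-1)=-\gamma^{i-1}/b^d\to 0$, and invoke the standard convergence result for contractive affine recursions. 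What your approach buys is a cleaner and arguably more rigorous argument: the paper's step ``$\lim|v(i)|=\psi\gamma\lim|v(i)|$, and thus zero'' tacitly presupposes the limit exists, whereas your iteration $c(n,n)=\alpha^n c(0,0)+\sum_{k}\alpha^{n-k}r_k$ establishes existence and the value of the limit in one stroke. What you give up is only that the paper's difference-sequence computation exposes the geometric rate $\psi\gamma$ at which consecutive diagonal entries approach each other, which may be of independent interest; your $\alpha$ plays the analogous role but is a different constant. Both arguments ultimately hinge on the same algebraic identity relating $b_d\gamma$, $b_u\psi$ and $\sqrt{D}$, and both rely on Eq.~(\ref{eq:diagonal}) and the product forms from Propositions~\ref{prop:hom} and~\ref{prop:cijunder} having been established beforehand.
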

\begin{proof}
First, it is important to note that by definition, $b^{d}\gamma^{2}-b^{w}\gamma+b^{u}=0$, and thus:
$$b^{d}\gamma^{2}=b^{w}\gamma-b^{u}.$$

We want to show that the diagonal elements $c(i,i)$ converge to a (negative) constant, for all $i=1,2,\ldots$.
 Therefore we define the dif{}ference $v(i)$ between two subsequent diagonal elements as:   
 $$v(i):=c(i,i)-c(i-1,i-1).$$ 
 In addition, we use an alternative expression for $c(i,i)$ in the equality $B_{i-1}C'_{i}=0$, given in \eqqref{eq:ab-diag} below.
\begin{equation}\label{eq:ab-diag}
c(i,i)=(-b_{z} c(0,i) -b_{d} \gamma^{2} c(i-2,i-2) + b_{w}\gamma c(i-1,i-1))/b_{u}.
\end{equation}

We find a recursive expression for $v(i)$, using \eqqref{eq:ab-diag} and the following steps:
\begin{align*}
-b_{z}c(0,i)=\gamma^{i}b_{z}/b_{d}&=b_{u} c(i,i)-b_{w}\gamma c(i-1,i-1) + b_{d}\gamma^{2}c(i-2,i-2)\\
&=b_{u} \left(c(i,i)-c(i-2,i-2)\right)-b_{w}\gamma \left(c(i-1,i-1)-c(i-2,i-2)\right)\\
&=b_{u}\left(v(i)-v(i-1)\right)-b_{w}\gamma v(i-1),\\
\end{align*}
and thus:
\begin{equation}
v(i)=\gamma^{i}b_{z}/b_{d}b_{u}+\frac{b_{w}\gamma-b_{u}}{b_{u}}v(i-1).
\end{equation}
In the sequel we will use that $\gamma^{i}\rightarrow \infty$ and that:
\begin{align*}
\frac{b_{w}\gamma-b_{u}}{b_{u}}&=b_{d}\gamma^{2}/b_{u},\\
&=\psi\gamma.
\end{align*}
Note that since both $\psi$ and $\gamma$ are  between $0$ and $1$, there product is in the same interval.
We can use all of the above to prove the convergence of $c(i,i)$ by considering the absolute value of the difference function $v(i)$.
\begin{align*}
\lim_{i\rightarrow \infty} |v(i)|&=\lim_{i\rightarrow \infty} | \gamma^{i}b_{z}/(b_{d}b_{u})+\frac{b_{w}\gamma-b_{u}}{b_{u}}v(i-1) |\\
&=\frac{b_{d}\gamma^{2}}{b_{u}}\lim_{i\rightarrow \infty} | \gamma^{i-2}b_{z}/b_{d}^{2}+v(i-1) |\\
&\leq
\psi\gamma\lim_{i\rightarrow \infty} | \gamma^{i-2}b_{z}/b_{d}^{2}|+\lim_{i\rightarrow \infty}|v(i-1) |\\
&=\psi\gamma\lim_{i\rightarrow \infty}|v(i-1)|\\
&=\psi\gamma\lim_{i\rightarrow \infty}|v(i)|,
\end{align*}
and thus zero.
Since we have found that the series converges, we can explicitly express this limit and complete the proof.
\begin{align*}
\lim_{i\rightarrow \infty}c(i,i)&=\lim_{i\rightarrow \infty}(-1+(b_{u}+b_{z}-b_{u}\psi) \gamma^{i} c(0,0) +b_{d} \gamma c(i-1,i-1))/(b_{w}-b_{u}\psi)\\
&=-1/(b_{w}-b_{u}\psi)+b_{d}\gamma/(b_{w}-b_{u}\psi) \lim_{i\rightarrow \infty}c(i,i)\\
&=-1/(b_{w}-b_{d}\gamma-b_{u}\psi)\\
&=\frac{-1}{\sqrt{D}}.
\end{align*}
\end{proof}

 \subsection{The   Homogenous - Finite Dimension Case}
In this section we consider the element homogenous and finite version of matrix $B$. 
To be able to use Algorithm \ref{Alg:HomInv} and thus to compute the row and column independent scalars $\gamma$ and $\psi$, it is necessary that the state space is truncated in a specific way. This truncation is described in the proposition below.
\begin{proposition}
Consider a finite and element homogenous matrix $B$ of size $\ell\times\ell$ satisfying \eqqref{eq:b}. If:
$$B(\ell,\ell)=-b^{u}/\gamma$$
and
$$B(1,\ell)=b^{u}/\gamma-b^{d}$$
then
$c(i,j)$ can be computed per \eqqref{eq:cijhom}
 where $\gamma$ is defined as in \eqqref{eq:gammah} and $\psi$ as in \eqqref{eq:psi}.
\end{proposition}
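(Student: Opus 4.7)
The plan is to verify that the matrix $C$ with entries given by Eq.~(\ref{eq:cijhom}) is the inverse of the specified finite matrix $B$. Because $B$ differs from its natural infinite element-homogenous extension only at the two positions $(1,\ell)$ and $(\ell,\ell)$, the crux is to show that every scalar identity $(BC)_{ij}=\delta_{ij}$ either reduces to one already established for the infinite case (Propositions~\ref{prop:hom} and \ref{prop:cijunder}) or, at the boundary, is verified using the two prescribed modifications.

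I would first observe that the three-term recurrences driving Eq.~(\ref{eq:cijhom})---namely the one yielding $c(i,j)=\gamma^{j-i}c(i,i)$ on and above the diagonal (see the proof of Proposition~\ref{prop:hom}) and the one yielding $c(i,j)=(c(j,j)-c(0,j))\psi^{i-j}+c(0,j)$ below the diagonal (see Proposition~\ref{prop:cijunder})---are local three-term relations that only see the interior of $B$. Consequently, for any row index $i\notin\{1,\ell\}$ and every column index $j$, the identity $(BC)_{ij}=\delta_{ij}$ holds automatically, since the relevant row of $B$ agrees with the infinite homogenous case and the formula for $c(\cdot,\cdot)$ is unchanged. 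The same is true column-wise for any column index $j\notin\{\ell\}$. Hence only the equations associated with row $1$, row $\ell$, and column $\ell$ require separate verification.

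For the row-$\ell$ identity I would exploit the modification $B(\ell,\ell)=-b^u/\gamma$: it is chosen so that the truncated equation $b^z c(0,j)+b^d c(\ell-1,j)-(b^u/\gamma)c(\ell,j)=\delta_{\ell,j}$ absorbs the missing $b^u c(\ell+1,j)$ term from the infinite case. Using the characteristic equation $b^d\gamma^2-b^w\gamma+b^u=0$ in the equivalent form $b^u/\gamma=b^w-b^d\gamma$, the identity $\psi b^u=b^d\gamma$ from Proposition~\ref{prop:cijunder}, and the row-sum identity $b^w=b^z+b^d+b^u$, I would substitute the closed-form values of $c(\ell-1,j)$ and $c(\ell,j)$ supplied by Eq.~(\ref{eq:cijhom}) and verify the equation separately in the cases $j<\ell$ (where $c(\ell,j)$ is in the sub-diagonal regime governed by $\psi$) and $j=\ell$ (where $c(\ell-1,\ell)=\gamma c(\ell,\ell)$). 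The analogous check at row $1$ uses the modified entry $B(1,\ell)=b^u/\gamma-b^d$ to compensate for the fact that the geometric tail $c(0,j)=\gamma^j c(0,0)$, evaluated at $j=\ell$, would otherwise fail to satisfy the top-row equation after truncation.

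The main obstacle is bookkeeping rather than algebraic depth: one must keep straight the three regimes of Eq.~(\ref{eq:cijhom}), keep track of which characteristic identity relating $\gamma$, $\psi$, $b^u$, $b^d$, $b^w$, $b^z$ is being invoked at each step, and handle separately the equations at $(1,\ell)$ and at $(\ell,j)$ for varying $j$. Once both boundary equations are verified, $BC=I$ is complete and the closed-form expression for $C=B^{-1}$ follows.
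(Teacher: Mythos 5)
Your overall strategy---reduce everything to the infinite homogeneous case and check only the boundary identities, invoking $b^{u}/\gamma=b^{w}-b^{d}\gamma$ and $\psi b^{u}=b^{d}\gamma$---is the same as the paper's, which simply verifies $C_{i}B'_{\ell}=0$ and $B_{\ell}C'_{j}=\delta_{\ell j}$ and observes that the geometric relations $c(i,\ell)=\gamma c(i,\ell-1)$ and $c(\ell,j)-c(0,j)=\psi\bigl(c(\ell-1,j)-c(0,j)\bigr)$ survive the truncation. However, there is a genuine gap: you have placed the second modified entry $b^{u}/\gamma-b^{d}$ in the top-right corner (first row, last column), whereas the argument requires it to sit in the \emph{last row, first column}, i.e.\ it replaces $b^{z}_{\ell}$ so that the final row still sums to zero (compare the remark in the non-homogeneous finite case that $b^{w}_{\ell}=b^{d}_{\ell}+b^{z}_{\ell}$). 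The notation $B(1,\ell)$ in the statement is admittedly confusing, but the top row needs no compensation at all: the equation $B_{0}C'_{j}=\delta_{0j}$ involves only $c(0,j)$ and $c(1,j)$ and is identical to the infinite case, so inserting a nonzero corner entry there would add a spurious term $(b^{u}/\gamma-b^{d})c(\ell,j)\neq 0$ and break it.

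With your placement the last-row verification also fails. For $j<\ell$, substituting the sub-diagonal formula of Eq.~(\ref{eq:cijhom}) into the last row $(b^{z},0,\ldots,0,b^{d},-b^{u}/\gamma)$ gives
\begin{align*}
B_{\ell}C'_{j} &= b^{z}c(0,j)+b^{d}c(\ell-1,j)-\frac{b^{u}}{\gamma}c(\ell,j)\\
&=\Bigl(b^{d}-\frac{b^{u}}{\gamma}\psi\Bigr)\bigl(c(j,j)-c(0,j)\bigr)\psi^{\ell-1-j}+\Bigl(b^{z}+b^{d}-\frac{b^{u}}{\gamma}\Bigr)c(0,j)\\
&= 0+b^{u}(\psi-1)\,\gamma^{j}c(0,0)\;\neq\;0,
\end{align*}
since $b^{d}=(b^{u}/\gamma)\psi$ annihilates the first term but the residual $(b^{z}+b^{d}-b^{u}/\gamma)=b^{u}(\psi-1)$ survives unless $b^{z}$ is replaced by $b^{u}/\gamma-b^{d}$. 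Once that entry is moved to the $(\ell,0)$ position, everything in your plan goes through and essentially collapses to the paper's two-line computation; the rest of your bookkeeping (interior equations reduce to Propositions~\ref{prop:hom} and \ref{prop:cijunder}, and a one-sided inverse of a finite square matrix is two-sided) is fine.
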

\begin{proof}
When considering  $C_{i}B'_{\ell}=0$ with $0\le i\le \ell-1$
we find that:
$$
b^{u}c(i,\ell)-b^{u}/\gamma c(i,\ell)=0
$$
and thus that $c(i,\ell)=\gamma c(i,\ell-1)$. All elements $c(i,j)$ can now be expressed in terms of $\gamma$ and its predecessor analogous to Proposition \ref{prop:hom}.

When considering  $B_{\ell}C'_{j}=0$ with $0\le i\le \ell-1$ and the observation that $b^{u}/\gamma=b^{d}/\psi$
we find that:
$$
(b^{d}/\psi-b^{d})c(0,j)+b^{d}c(\ell-1,j)-b^{d}/\psi c(\ell,j)=0.
$$
Rewriting gives that:
$$
b^{d}(c(\ell,j)-c(0,j))=\psi b^{d}(c(\ell-1,j)-c(0,j)).
$$
All elements can now be expressed in terms of $\psi$, its predecessor and $c(0,j),$ analogous to Proposition \ref{prop:cijunder}.
\end{proof}

When an alternative truncation of $B$ is chosen or provided, the elements $\gamma$ and $\psi$ turn out to be row and column dependent  and therefore we refer to the solution procedure of a non element homogenous finite matrix $B$.

It is clear that to computation of each element only requires a linear number of steps. Therefore the complexity of Algorithms~\ref{Alg:Invd} and \ref{Alg:HomInv} is $\mathcal{O}(\ell^{2}).$

\section{The eigenvalues of Matrix \emph{B}}\label{sec:eigen}
In this section we will state properties for the eigenvalues $\{\lambda^{b}_{i}\}_{i=1,\ldots,\ell+1}$ of matrix $B$, valid for the two finite appearances of the matrix described in the previous sections.
 
We will use the structure of matrix $W$, introduced in \eqqref{eq:buw}.
In general we do not require that $w_{i,i-1}w_{i-1,i}>0$. When this property is violated, then most of the analysis below does not hold. For this section we therefore do assume this inequality to hold.
 We will make use of the following well-known lemma, proved in several papers.
 
 \begin{proposition}\label{lem:eigW}
 Consider a finite tridiagonal, diagonal matrix $W$ with entries $w_{ij}$ where the product $w_{i,i+1}w_{i-1,i}>0$ and $w_{ii}<0$ for all $i$ and $w_{i,i-1}+w_{i,i}+w_{i,i+1}=0.$ Its eigenvalues $\{\lambda_{i}\}_{i=1,\ldots,\ell+1}$ are negative, real and distinct.
 \end{proposition}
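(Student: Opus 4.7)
The plan is to reduce to the symmetric case via a diagonal similarity and then read off the three properties from classical facts. Because $w_{i,i+1}w_{i-1,i}>0$ for every adjacent pair, I can choose positive scalars $d_1,d_2,\ldots,d_{\ell+1}$ recursively by setting $d_1=1$ and $d_{i+1}=d_i\sqrt{w_{i,i+1}/w_{i+1,i}}$, and form $D=\mathrm{diag}(d_1,\ldots,d_{\ell+1})$. Then $\tilde{W}:=D^{-1}WD$ is tridiagonal with the same diagonal as $W$ but with off-diagonal entries equal to $\sqrt{w_{i,i+1}w_{i+1,i}}$ on both sides of the diagonal, so $\tilde W$ is symmetric with strictly positive off-diagonal entries. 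Since similarity preserves the spectrum, it suffices to prove the three claims for $\tilde W$.

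Real-valuedness of the eigenvalues is then immediate from the symmetry of $\tilde W$. For distinctness I plan to use the classical ``unreduced tridiagonal'' argument: any eigenvector $v$ of $\tilde W$ with eigenvalue $\lambda$ satisfies the three-term recurrence
\[
\tilde w_{i,i-1}v_{i-1}+(\tilde w_{ii}-\lambda)v_i+\tilde w_{i,i+1}v_{i+1}=0,
\]
and since every superdiagonal entry is nonzero, the value of $v_1$ determines every subsequent $v_i$ uniquely, while $v_1=0$ forces $v\equiv 0$. Hence each eigenspace has dimension one, and because $\tilde W$ is symmetric its algebraic and geometric multiplicities agree, giving $\ell+1$ pairwise distinct eigenvalues.

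For negativity, the natural tool is the Gershgorin disk theorem: every eigenvalue $\lambda$ lies in some disk $\{z:|z-w_{ii}|\le |w_{i,i-1}|+|w_{i,i+1}|\}$. Positivity of the (transformed) off-diagonal entries together with the zero-row-sum identity yields $|w_{i,i-1}|+|w_{i,i+1}|=-w_{ii}$, so each eigenvalue satisfies $2w_{ii}\le\lambda\le 0$. The main obstacle --- and the step that warrants the most care --- is ruling out $\lambda=0$: with exact zero row sums the vector $\mathbf{1}$ lies in the kernel of $W$. Resolving this requires the finer structure implicit in the paper's use of $W$, namely that at least one row (coming from an absorbing coordinate) is in fact strictly diagonally dominant; Gershgorin then gives a strict inequality at that row, and irreducibility, which follows from unreducedness of $\tilde W$, propagates strictness across the entire spectrum via a Perron--Frobenius argument applied to a positive shift of $-\tilde W$.
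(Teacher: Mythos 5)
Your argument is correct, and it is genuinely different from what the paper does: the paper does not prove Proposition~\ref{lem:eigW} at all, but simply points to the literature (Sturm-sequence arguments as in Grassmann, diagonalization, or Meurant's survey). You instead give a self-contained proof by the standard diagonal similarity $\tilde W = D^{-1}WD$ with $d_{i+1}=d_i\sqrt{w_{i,i+1}/w_{i+1,i}}$, which is legitimate precisely because of the hypothesis $w_{i,i+1}w_{i+1,i}>0$; symmetry then gives realness, and the unreduced three-term recurrence gives one-dimensional eigenspaces and hence simplicity. This buys transparency and makes explicit which hypotheses are actually used, at the cost of being the ``diagonalization'' route the paper waves at rather than the Sturm-sequence route. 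Your most valuable contribution is the observation the paper glosses over: if the zero-row-sum identity $w_{i,i-1}+w_{ii}+w_{i,i+1}=0$ held at \emph{every} row (boundary entries read as zero), then $W\mathbf{1}=0$ and the proposition as literally stated would be false. You correctly identify that negativity must come from at least one strictly diagonally dominant boundary row --- which is indeed how $W$ arises from $B$ in this paper, since the first row of $B$ has row sum $-b_0^d<0$ --- combined with irreducibility and a Perron--Frobenius (or irreducible-Gershgorin) argument to exclude $0$ from the spectrum. This is a point the paper's statement and citation-proof leave implicit, and your version is the more careful one.
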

 \begin{proof}
One of the more complicated proofs of this statements uses Sturm sequences. (cf.~\cite{grassmann2002real}) 
Other proofs involve diagonalization, or follow the line of Meurant (cf.~\cite{meurant1992review}).
 \end{proof}
Using the results in the paper of Alexanderian \cite{alexanderian2013continuous}, Remark 3.4 and Remark 3.6 we know that for a small perturbation in matrix $B$ the eigenvalues lie in a balls of radius $\epsilon$ around the eigenvalues of $W$.
The corresponding set of eigenvectors $\{c_{1},c_{2},\ldots,c_{\ell+1}\}$ of $W$ is therefore a complete set of eigenvectors for $A$ and therefore spans the  space $\mathbb{R}^{\ell+1}.$ The first element of $c_{i}$ can not be zero: the equations of $Ac=\lambda c$ imply iteratively that all entries of $c_{i}$ are zero and thus that $c_{i}$ is a null-vector.


 Below we prove a result regarding the eigenvalues of $B$.
 \begin{theorem}
When matrix $B$ is finite, all its eigenvalues have a negative real part.
\end{theorem}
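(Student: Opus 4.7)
My plan is to combine the Gershgorin circle theorem with the invertibility of $B$ already established by Algorithm~\ref{Alg:Invd} in Section~\ref{sec:inv}. The geometric picture is that the row-sum condition \eqqref{eq:rowsum} places each Gershgorin disk from rows $i\ge 1$ tangent to the imaginary axis from the left at the origin, while condition \eqqref{eq:bod} pushes the disk from row $0$ strictly into the open left half-plane. Invertibility of $B$ then removes the single remaining candidate on the imaginary axis, namely $\lambda=0$.

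I would carry out the argument in three steps. First, compute the Gershgorin disks row by row. For each $i\ge 1$, the diagonal entry is $-b_i^w$ and the off-diagonal absolute row sum is $b_i^z+b_i^d+b_i^u=b_i^w$ by \eqqref{eq:rowsum}, so the $i$-th disk is $\{z\in\mathbb{C}:|z+b_i^w|\le b_i^w\}$, a closed disk that meets the imaginary axis only at the origin. For $i=0$, the disk is centred at $-b_0^d-b_0^u$ with radius $b_0^u$, hence is contained in $\{z:\mathrm{Re}(z)\le -b_0^d\}$, which is strictly left of the imaginary axis by \eqqref{eq:bod}. Second, conclude that every eigenvalue $\lambda$ of $B$ satisfies $\mathrm{Re}(\lambda)\le 0$, with $\mathrm{Re}(\lambda)=0$ only if $\lambda=0$. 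Third, invoke invertibility: Algorithm~\ref{Alg:Invd}, adapted to the finite case as described in Section~2.2, explicitly produces a matrix $C$ with $BC=CB=I$, so $B$ is nonsingular and $0$ is not an eigenvalue; this kills the last candidate on the imaginary axis.

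The only delicate point is that Gershgorin by itself gives only a weak (non-strict) bound on the real parts, so I have to separate the ``geometric'' step from the ``algebraic'' step. What makes the argument go through is that the disks from rows $i\ge 1$ touch the imaginary axis in exactly one point, and that point is ruled out by the invertibility already proved constructively, rather than by any further spectral estimate. In essence this is the standard observation that a diagonally dominant generator matrix with a strict defect in at least one row (here, row $0$) has its spectrum in the open left half-plane, specialised to the block pattern of $B$; no step requires calculation beyond what Section~\ref{sec:inv} already supplies.
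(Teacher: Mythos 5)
Your proposal is correct and follows essentially the same route as the paper: Gershgorin discs to place the spectrum in the closed left half-plane, plus invertibility of $B$ to exclude $\lambda=0$. In fact your version is slightly more careful than the paper's, which asserts $R_i<-b_{ii}$ for every row even though rows $i\ge 1$ have $R_i=-b_{ii}$ exactly (discs tangent to the imaginary axis at the origin), precisely the point you handle by separating the weak Gershgorin bound from the invertibility step.
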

\begin{proof}
Since $B$ is invertible (cf.~\cite{sslqsf2013}) all eigenvalues are nonzero.
By the Gershgorin circle theorem we know that every eigenvalue of  $B$  lies within at least one of the Gershgorin discs  $D(b_{ii},R_i)$ for all $i \in\{1,\dots,\ell+1\}$ where $R_i = \sum_{j\neq{i}} b_{ij}$ and by definition the diagonal elements $b_{ii}<0.$ Since $R_{i}<-b_{ii}$ every  $D(b_{ii},R_i)$ is non positive.
\end{proof}

One possible way to construct matrix $B$ from matrix $W$ is by adding a linear combination of the eigenvectors (since $W$ has full rank, see Proposition~\ref{lem:eigW}) to the first column:
$$B=W+(n_{1}c_{1}+n_{2}c_{2}+\ldots +n_{\ell+1}c_{\ell+1})\delta,$$
where $\delta$ is a row vector with $\delta(0)=1$ and $\delta(i)=0$ elsewhere; the numbers $n_{i}$ are scalars, possibly zero. 
By reordering and renaming such that $\{n_{1},n_{2},\ldots,n_{M}\}\neq 0$ and 
by norming the eigenvectors $c_{i}$ at size $1/n_{i}$, we get without loss of generality that:
\begin{equation}\label{eq:bweig}
B=W+(c_{1}+c_{2}+\ldots +c_{M})\delta,
\end{equation}
with $c_{i}$ a normed (renamed) eigenvector. 
We prove the following proposition, regarding matrix $A_{1}:=A+
\alpha_{1}c_{1}\delta$, for any matrix $A$ (not necessarily a tridiagonal matrix), where use the same notation for the eigenvalues and eigenvectors of $A$ as those of $W$  and  scalar $\alpha_{1}$ is chosen such that $\alpha_{1}\neq \frac{\lambda_{i}-\lambda_{1}}{c_{1}(0)}$ for all $i$.

\begin{proposition}\label{lem:eigvalue}
Matrix $A_{1}$ has eigenvalues $\{\lambda_{1}+\alpha_{1}c_{1}(0),\lambda_{2},\ldots,\lambda_{\ell+1}\}$ and eigenvectors $\{c_{1},c_{2}+x_{1}c_{1},\ldots, c_{\ell+1}+x_{\ell+1}c_{1}\}$, where:
$$x_{i}=\frac{\alpha_{1}c_{i}(0)}{\lambda_{i}-\lambda_{1}-\alpha_{1} c_{1}(0)}.$$
\end{proposition}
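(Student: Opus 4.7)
The proposal is to verify each eigenpair directly, using the fact that $\delta$ is the row vector extracting the $0$th coordinate, so that for any column vector $v$ we have $\delta v = v(0)$, and hence for any scalar $\beta$ the rank-one perturbation acts by $(\alpha_1 c_1 \delta) v = \alpha_1 v(0)\, c_1$.

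First I would handle the perturbed eigenvalue. Plugging $c_1$ into $A_1$ gives $A_1 c_1 = A c_1 + \alpha_1 c_1(0)\, c_1 = (\lambda_1 + \alpha_1 c_1(0))\, c_1$, which establishes the first pair $(\lambda_1 + \alpha_1 c_1(0),\, c_1)$ in one line. Next, for $i \ge 2$, I would apply $A_1$ to the trial vector $c_i + x_i c_1$, use the eigenvector property for both $c_i$ and $c_1$, and collect coefficients of $c_i$ and $c_1$. Setting the result equal to $\lambda_i(c_i + x_i c_1)$ makes the $c_i$-component match automatically, while the $c_1$-component yields the single scalar equation
\begin{equation*}
x_i \lambda_1 + \alpha_1 c_i(0) + \alpha_1 x_i c_1(0) = \lambda_i x_i,
\end{equation*}
which rearranges precisely to $x_i = \alpha_1 c_i(0)/(\lambda_i - \lambda_1 - \alpha_1 c_1(0))$, matching the stated formula. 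The assumption $\alpha_1 \neq (\lambda_i - \lambda_1)/c_1(0)$ is exactly what makes the denominator nonzero, so $x_i$ is well-defined; this is the one place the hypothesis is genuinely used.

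Finally, to show that the listed vectors really are a full set of eigenvectors (and that the eigenvalue list is correct, counting multiplicities), I would note that the change-of-basis from $\{c_1, c_2, \ldots, c_{\ell+1}\}$ to $\{c_1,\, c_2 + x_2 c_1,\, \ldots,\, c_{\ell+1} + x_{\ell+1} c_1\}$ is given by a matrix whose columns are these coefficient vectors expressed in the basis $\{c_1,\ldots,c_{\ell+1}\}$; this matrix is upper triangular with $1$'s on the diagonal, hence invertible. Since $\{c_i\}$ spans $\mathbb{R}^{\ell+1}$ by Proposition~\ref{lem:eigW} (and the remark about $A$ inheriting a complete set of eigenvectors), the new family also spans, so we have exhibited $\ell+1$ linearly independent eigenvectors of $A_1$ with the claimed eigenvalues.

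There is no genuine obstacle here; the proof is essentially a one-term rank-one update calculation. The only subtlety worth flagging is that the construction silently assumes the perturbation shifts only the eigenvalue attached to $c_1$, which is why the denominator $\lambda_i - \lambda_1 - \alpha_1 c_1(0)$ must be forbidden from vanishing by hypothesis on $\alpha_1$; otherwise the shifted eigenvalue would collide with some $\lambda_i$ and the simple diagonal-plus-rank-one ansatz would fail to produce an independent eigenvector for that value.
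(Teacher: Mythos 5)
Your proof is correct, but it establishes the eigenvalues by a genuinely different route than the paper. The paper first computes the characteristic polynomial of $A_{1}$ via the rank-one determinant identity $|A-\lambda I+\alpha_{1}c_{1}\delta|=|A-\lambda I|\bigl(1+\alpha_{1}\delta(A-\lambda I)^{-1}c_{1}\bigr)$, using $(A-\lambda I)^{-1}c_{1}=c_{1}/(\lambda_{1}-\lambda)$ to factor it as $\pm\prod_{i\ge 2}(\lambda_{i}-\lambda)\cdot(\lambda_{1}+\alpha_{1}c_{1}(0)-\lambda)$, which reads off the full multiset of eigenvalues (with algebraic multiplicities) before any eigenvector is touched; only afterwards does it verify the eigenvectors by a computation essentially identical to yours. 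You instead skip the determinant lemma entirely, exhibit all $\ell+1$ eigenpairs by direct substitution, and recover completeness of the eigenvalue list from the upper-triangular (hence invertible) change of basis from $\{c_{1},\dots,c_{\ell+1}\}$ to $\{c_{1},c_{2}+x_{2}c_{1},\dots\}$. The trade-off: the paper's determinant argument yields the spectrum without relying on $\{c_{i}\}$ being a basis, whereas your argument needs that completeness (which the surrounding text does supply via Proposition~\ref{lem:eigW}) but is more elementary and cleaner --- in particular it avoids the slips in the paper's own eigenvector verification, where $(\lambda_{1}+\alpha_{1}c_{1})I$ appears in place of $(\lambda_{1}+\alpha_{1}c_{1}(0))I$ and a stray term $\alpha_{1}c^{1}_{1}\delta c_{1}$ is written where $\alpha_{1}c_{1}(0)c^{1}_{1}$ is meant. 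You also correctly isolate the role of the hypothesis $\alpha_{1}\neq(\lambda_{i}-\lambda_{1})/c_{1}(0)$ as guaranteeing the denominator of $x_{i}$ is nonzero, which the paper uses but never flags.
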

\begin{proof}
In the derivation below we use this equality:
$$c_{1}=(A-\lambda I)^{-1}(A-\lambda I)c_{1}=(A-\lambda I)^{-1}(Ac_{1}-\lambda c_{1})=(A-\lambda I)^{-1}(\lambda_{1}-\lambda)c_{1}.$$
The characteristic polynomial for $A_{1}$ is as follows:
\begin{align*}
|A_{1}-\lambda I|=|A+\alpha_{1}c_{1}\delta-\lambda I|&=|A-\lambda I+\alpha_{1}c_{1}\delta|\\
&=|A-\lambda I|(1+\alpha_{1}\delta (A-\lambda I)^{-1}c_{1})\\
&=\left(\pm \prod_{i=1}^{\ell+1}(\lambda_{i}-\lambda)\right)\left(1+\frac{\alpha_{1}\delta c_{1}}{\lambda_{1}-\lambda}\right)\\
&=\left(\pm \prod_{i=2}^{\ell+1}(\lambda_{i}-\lambda)\right)\left(
\lambda_{1}+\alpha_{1}c_{1}(0)-\lambda \right).
\end{align*}
The roots of this polynomial are the eigenvalues of $A_{1}$ and equal  to $\{\lambda_{1}+\alpha_{1}c_{1}(0),\lambda_{2},\ldots,\lambda_{\ell+1}\}$, denoted as $\{\lambda^{1}_{1},\lambda^{1}_{2},\ldots,\lambda^{1}_{\ell+1}\}.$

 We denote the corresponding eigenvectors of $A_{1}$ by $\{c^{1}_{1},\ldots,c^{1}_{\ell+1}\}$ and they are computed as below. By definition, $(A_{1}-\lambda_{1}^{1}I)c^{1}_{1}=0$ and we derive:
 
 \begin{align*}
  (A_{1}-\lambda^{1}_{1}I)c^{1}_{1}&=\left(A+\alpha_{1}c_{1}\delta-(\lambda_{1}+\alpha_{1} c_{1})I\right)c^{1}_{1}\\
  &=(A-\lambda_{1}I)c^{1}_{1}+\alpha_{1}c_{1}\delta c^{1}_{1}-\alpha_{1}c^{1}_{1}\delta c_{1}\\
  &=0.
 \end{align*}
The solution of this system of equations is $c^{1}_{1}=c_{1}.$ This is the unique solution, since it solves a set of $\ell+1$ independent linear equations with $\ell+1$ variables.

Next, we show that for all $i\ge 2$, $c_{i}^{1}$ is a linear combination of $c_{i}$ and $c_{1}$, i.e.~$c_{i}^{1}=x_{i}c_{1}+c_{i}.$
We consider $(A_{1}-\lambda^{1}_{i}I)c^{1}_{i}=0.$ 

 \begin{align*}
  (A_{1}-\lambda^{1}_{i}I)c^{1}_{i}&=\left(A+\alpha_{1}c_{1}\delta-\lambda_{i}I\right)c^{1}_{i}\\
  &=(A-\lambda_{i}I)(x_{i}c_{1}+c_{i})+\alpha_{1}c_{1}(x_{i}c_{1}(0)+c_{i}(0))\\
  &=x_{i}Ac_{1}-x_{i}\lambda_{i}c_{1}+\alpha_{1}x_{i}c_{1}(0)c_{1}+\alpha_{1}c_{i}(0)c_{1}\\
  &=x_{i}(\lambda_{1}-\lambda_{i}+\alpha_{1}c_{1}(0))c_{1}+\alpha_{1}c_{i}(0)c_{1}\\
  &=0.
 \end{align*}
Thus choosing $x_{i}$ as below solves this equation:
$$x_{i}=\frac{-\alpha_{1}c_{i}(0)}{\lambda_{1}-\lambda_{i}+\alpha_{1}c_{1}(0)}=\frac{\alpha_{1}c_{i}(0)}{\lambda^{1}_{i}-\lambda^{1}_{1}}.$$
\end{proof}

\begin{cor}\label{cor:eigreal}
If the eigenvalues of an arbitrary matrix $A$ are real and $\alpha_{1}$ is real, then the eigenvalues of $A_{1}$ are real.
\end{cor}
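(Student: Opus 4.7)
The plan is to invoke Proposition \ref{lem:eigvalue} directly: it states that the spectrum of $A_1 = A + \alpha_1 c_1 \delta$ is
\[
\{\lambda_1 + \alpha_1 c_1(0),\, \lambda_2, \ldots, \lambda_{\ell+1}\}.
\]
By hypothesis $\lambda_2, \ldots, \lambda_{\ell+1}$ are real, so the only eigenvalue whose reality is not immediate is $\lambda_1 + \alpha_1 c_1(0)$, and the whole task reduces to showing that the first entry $c_1(0)$ of the eigenvector $c_1$ is real.

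First I would note that $A$ is a real matrix (this is the tacit setting of the section, since $W$ and $B$ have real entries), and that $\lambda_1$ is a real eigenvalue by assumption. Then the nullspace of the real matrix $A - \lambda_1 I$ is the complexification of a real subspace, so an eigenvector $c_1$ for $\lambda_1$ can be chosen with real entries. In particular $c_1(0) \in \mathbb{R}$. Combined with $\alpha_1 \in \mathbb{R}$ and $\lambda_1 \in \mathbb{R}$, this gives $\lambda_1 + \alpha_1 c_1(0) \in \mathbb{R}$, and the corollary follows.

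The main (mild) subtlety will be justifying that we may take a real representative of the eigenvector $c_1$; this is standard but worth stating, because the statement of Proposition \ref{lem:eigvalue} does not a priori normalize $c_1$ to be real. I would briefly remark that whenever $A$ is real and $\lambda_1$ is real, any complex eigenvector splits into real and imaginary parts that are each real eigenvectors, at least one of which is nonzero, so a real choice of $c_1$ exists. No other step should present any difficulty.
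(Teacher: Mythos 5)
Your proposal is correct and matches the paper's (implicit) argument: the corollary is stated without proof as an immediate consequence of Proposition~\ref{lem:eigvalue}, exactly as you read it. Your added remark that $c_{1}(0)$ may be taken real because $A$ and $\lambda_{1}$ are real is a sensible piece of care that the paper omits, but it does not constitute a different route.
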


 As is described above, $B$ is a combination of matrix $W$ and a finite number $M$ of well normed eigenvectors added to its first column. We want to use the result of Proposition~\ref{lem:eigvalue} to construct the eigenvalues of $B$ from those of $W$.
However, after the addition of an eigenvector, the other eigenvectors change. Adding another eigenvector $c_{i}$ of $W$ does not satisfy the conditions to use Proposition~\ref{lem:eigvalue} anymore, since $c_{1}$ is not an eigenvalue of the constructed matrix. The eigenvalues will change differently from the way described in that proposition.

To use Proposition~\ref{lem:eigvalue} iteratively, we define a sequence of matrices $W_{i}, i=1,2,\ldots,M,$ as follows:
\begin{equation}\label{eq:wi}
W_{i}:=W_{i-1}+\alpha_{i} c^{i-1}_{i},
\end{equation}
where $W_{0}=W.$
We will denote the eigenvalues and eigenvectors of $W_{i}$ as $\{\lambda^{i}_{j}\}_{j=1,\ldots,\ell+1}$ and $\{c^{i}_{j}\}_{j=1,\ldots,\ell+1}$. 
An intuitive but wrong way to construct $B$ would be to choose $\alpha_{i}=1$ for all $i$, as described above. The eigenvectors of $W_{i}$ dif{}fer from those of $W_{i-1}$, and are in fact linear combinations of those of $W_{i}.$ Thus we need to choose the factors $\alpha_{i}$ in such a way that for all $i$, the vector $c_{i}$ is completely added to $W$.
To ensure this, and thus that $B=W_{M}$ we add a fraction of vector $c^{i}_{i}$ in such a way that the `remaining' portion of that vector is spread out over the eigenvectors of $W_{i}$.

In other words, we are looking for scalars $\alpha_{i}$ such that the following set of equalities holds:
$$B=W+\sum_{j=1}^{M}c_{j}=\ldots=W_{i}+\sum_{j=i+1}^{M} c^{i}_{j}=\ldots=W_{M}.$$
Displaying $W_{i}$ in terms of $W$ by using Eq.~(\ref{eq:wi}) repeatedly, the dependence on $\alpha_{i}$ shows:

$$B=W+\sum_{j=1}^{M}c_{j}=\ldots=W+\sum_{j=1}^{i}\alpha_{j}c^{j-1}_{j}+\sum_{j=i+1}^{M}c^{i}_{j}=\ldots=W+\sum_{j=1}^{M}\alpha_{j}c^{j-1}_{j}.$$

This simplifies to the following for all $i=1,2,\ldots,M$:
$$\sum_{j=1}^{i-1}\alpha_{j}c^{j-1}_{j}+\sum_{j=i}^{M}c^{i-1}_{j}=\sum_{j=1}^{i}\alpha_{j}c^{j-1}_{j}+\sum_{j=i+1}^{M}c^{i}_{j}$$
and thus:
\begin{equation}\label{eq:alpha1}
(1-\alpha_{i})c^{i-1}_{i}=\sum_{j=i+1}^{M}(c^{i}_{j}-c^{i-1}_{j}).
\end{equation}

From Proposition~\ref{lem:eigvalue} and from the assumption we learned that $\lambda^{i}_{j}=\lambda^{i-1}_{j}=\ldots \lambda_{j}$ for all $j\ge i.$ The eigenvectors $c^{i}_{j}$ of $W_{i}$ are a linear combination of $c^{i-1}_{j}$ and $c^{i-1}_{i}$ as follows:

\begin{equation}\label{eq:cij}
c^{i}_{j}=c^{i-1}_{j}+\frac{\alpha_{i}c^{i-1}_{j}(0)}{\lambda_{j}-\lambda_{i}-\alpha_{i}c^{i-1}_{i}(0)}c^{i-1}_{i}.
\end{equation}

Combining Eq.~(\ref{eq:alpha1}) and (\ref{eq:cij}), we express $\alpha_{i}$ as a solution of the vector equations:
$$
(1-\alpha_{i})c^{i-1}_{i}=\sum_{j=i+1}^{M}\left(\frac{\alpha_{i}c^{i-1}_{j}(0)}{\lambda_{j}-\lambda_{i}-\alpha_{i}c^{i-1}_{i}(0)}\right)c^{i-1}_{i}.
$$

This set of dependent relations reduces to a single equation:
\begin{equation}\label{eq:alpha2}
1-\alpha_{i}=\alpha_{i}\sum_{j=i+1}^{M}\frac{c^{i-1}_{j}(0)}{\lambda_{j}-\lambda_{i}-\alpha_{i}c^{i-1}_{i}(0)}.
\end{equation}

This equation has at least one (possible non-real) solution, since $\lambda_{j}-\lambda_{i}\neq 0.$
We derive the following corollary regarding the eigenvalues of matrix $B$.

\begin{cor}\label{cor:eigvalues}
The eigenvalues $\lambda^{b}_{i}$ of $B$ are:
$$\lambda^{b}_{i}=\lambda_{i}+\alpha_{i}c^{i-1}_{i},$$
where $\lambda_{i}$ is an eigenvalue of $W,$ $c^{i-1}_{i}$ is constructed recursively via Equation~(\ref{eq:cij}) and $\alpha_{i}$ is the solution of Equation~(\ref{eq:alpha2}).
\end{cor}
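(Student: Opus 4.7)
The plan is to deduce the corollary by iterating Proposition~\ref{lem:eigvalue} along the telescoping chain $W = W_0, W_1, \ldots, W_M = B$ defined by the recursion~(\ref{eq:wi}), where the scalars $\alpha_i$ are chosen to satisfy~(\ref{eq:alpha2}). At stage $i$, the perturbation $W_i = W_{i-1} + \alpha_i c^{i-1}_i \delta$ is a rank-one update along an eigenvector of $W_{i-1}$, so Proposition~\ref{lem:eigvalue} applies verbatim (with the first eigenvector replaced by the $i$-th) and yields eigenvalues $\lambda^{i}_{i} = \lambda^{i-1}_i + \alpha_i c^{i-1}_i(0)$ and $\lambda^{i}_j = \lambda^{i-1}_j$ for $j \neq i$, together with updated eigenvectors as in~(\ref{eq:cij}).

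First I would verify that with these $\alpha_i$ the terminal matrix is indeed $W_M = B$. Starting from the decomposition~(\ref{eq:bweig}) and using (\ref{eq:wi}) inductively, this reduces exactly to the vector identity~(\ref{eq:alpha1}); substituting the expansion~(\ref{eq:cij}) shows that both sides of (\ref{eq:alpha1}) are scalar multiples of the common vector $c^{i-1}_i$, so the vector identity collapses to the scalar condition~(\ref{eq:alpha2}), which defines $\alpha_i$. This step establishes the chain identity
\begin{equation*}
B \;=\; W + \sum_{j=1}^{i-1}\alpha_j c^{j-1}_j\delta + \sum_{j=i}^{M}c^{i-1}_j\delta \;=\; W_M,
\end{equation*}
so that $B$ really is reached at the end of the telescoping.

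Second I would run the induction on $i$ to propagate eigenvalues forward. The crucial observation is that once the $i$-th eigenvalue has been shifted at stage $i$, none of the subsequent stages $j > i$ can move it further, because those stages perturb along eigenvectors different from the one corresponding to the already-shifted eigenvalue (Proposition~\ref{lem:eigvalue} only changes the eigenvalue of the eigenvector along which the rank-one update is performed). Therefore the cumulative eigenvalue of $B = W_M$ indexed by $i$ receives exactly one increment, at stage $i$, giving $\lambda^b_i = \lambda_i + \alpha_i c^{i-1}_i(0)$, which is the formula asserted in the corollary.

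The main obstacle will be ensuring that the hypotheses of Proposition~\ref{lem:eigvalue} continue to hold at every stage: namely that $\alpha_i \neq (\lambda^{i-1}_j - \lambda^{i-1}_i)/c^{i-1}_i(0)$ for $j \neq i$ (so that the denominators in~(\ref{eq:cij}) and~(\ref{eq:alpha2}) are nonzero) and that the first coordinate $c^{i-1}_i(0)$ is itself nonzero. The latter follows by the same argument given after Proposition~\ref{lem:eigW} for $W$, applied inductively: if $c^{i-1}_i(0) = 0$ then (\ref{eq:cij}) gives $c^{i-1}_i = c^{i-2}_i$, so the nonvanishing of the top entry is preserved from one stage to the next, ultimately inheriting from the tridiagonal base case. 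The non-degeneracy of the denominators is generic and can be ensured by Eq.~(\ref{eq:alpha2}) itself, which is a polynomial equation in $\alpha_i$ of degree at most $M - i + 1$ and hence always admits a (possibly complex) root away from the finitely many forbidden values, as already noted immediately before Corollary~\ref{cor:eigvalues}.
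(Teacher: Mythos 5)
Your proposal is correct and follows essentially the same route as the paper: the corollary is obtained there exactly by telescoping $W_i = W_{i-1} + \alpha_i c^{i-1}_i\delta$, choosing each $\alpha_i$ via Eq.~(\ref{eq:alpha2}) so that the vector identity (\ref{eq:alpha1}) forces $W_M = B$, and applying Proposition~\ref{lem:eigvalue} at each stage so that only the $i$-th eigenvalue is shifted. Your extra care about the standing hypotheses ($c^{i-1}_i(0)\neq 0$ and $\alpha_i$ avoiding the singularities) is a welcome tightening of details the paper only gestures at, not a different argument.
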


Note that the eigenvalues of $B$ are not necessarily distinct, despite the fact that those of $W$ are.
Although we have assumed that we start with a tridiagonal matrix $W$ this is not necessary for the derivation above. 
Therefore, as long as the perturbation in the first column can be expressed as a linear combination of eigenvectors the procedure above works to find the eigenvalues of matrix $B$.
 
We derive from Corollary~\ref{cor:eigreal} and \ref{cor:eigvalues} that if all $\alpha_{i}$ are real, then the eigenvalues of $B$ are real.
Below we will provide a sufficient condition for this to be true. Therefore, without loss of generality we will reorder and rename the eigenvectors and eigenvalues such that $\lambda_{1}<\lambda_{2}<\ldots<\lambda_{M}<0$. 

We emphasize that the scalars $\alpha_{i}$ only depend on the first entries of eigenvectors $c_{i}$, cf.~Eq.~(\ref{eq:alpha2}). For notational convenience in the sequel, we rewrite Eq.~(\ref{eq:alpha2}) as a function of $y$:
\begin{equation}\label{eq:falpha}
f_{i}(y)=1-y-y\sum_{j=i+1}^{M}\frac{c^{i-1}_{j}(0)}{\lambda_{j}-\lambda_{i}-yc^{i-1}_{i}(0)}.
\end{equation}

We are interested in whether the function $f_{i}(\cdot)$ has a real root or not. If it has a real root, then we can use this root as a solution $\alpha_{i}$ to construct the eigenvalues of $W_{i}.$ By Proposition~\ref{lem:eigvalue} all these eigenvalues are real, if those of $W_{i-1}$ are real. In the proposition below we identify a sufficient condition for $f_{i}(\cdot)$ to have a real root.
We define $S_{i,j}:=(\lambda_{j}-\lambda_{i})/c_{i}^{i-1}(0)$, the singularities of $f_{i}(\cdot)$. Since $\lambda_{j+1}>\lambda_{j}$ for all $j$ this sequence is strictly increasing and positive (decreasing and negative) in $j$ when $c_{i}^{i-1}(0)>0 (<0)$. As is reasoned before, $c_{i}^{i-1}(0)\neq 0.$

\begin{proposition}\label{lem:cijreal}
Suppose that $c^{i-1}_{i}(0)$ and $c^{i-1}_{i+1}(0)$ are both positive.
Then $f_{i}(\cdot)$ has at least one real positive root. 
\end{proposition}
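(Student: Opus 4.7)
The plan is to prove existence of a positive real root by a direct intermediate value theorem argument on the interval $(0,S_{i,i+1})$, where $S_{i,i+1}=(\lambda_{i+1}-\lambda_i)/c^{i-1}_i(0)$ is the smallest positive singularity of $f_i$.

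First I would establish what $f_i$ looks like at the left endpoint. Plugging in $y=0$ in \eqqref{eq:falpha} gives $f_i(0)=1>0$. Since the $\lambda_j$ are strictly ordered and $c^{i-1}_i(0)>0$ by hypothesis, the singularities $S_{i,j}=(\lambda_j-\lambda_i)/c^{i-1}_i(0)$ for $j=i+1,\ldots,M$ are positive and strictly increasing in $j$. Hence $f_i$ is a well-defined continuous rational function on the open interval $(0,S_{i,i+1})$.

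Next I would analyze the behavior of $f_i(y)$ as $y\to S_{i,i+1}^-$. In the summand with index $j=i+1$, the denominator $\lambda_{i+1}-\lambda_i - y\,c^{i-1}_i(0)$ tends to $0^+$, while the numerator equals $c^{i-1}_{i+1}(0)>0$ by the hypothesis of the proposition. The remaining summands, for $j\ge i+2$, stay bounded near $y=S_{i,i+1}$ because their denominators $\lambda_j-\lambda_i - y\,c^{i-1}_i(0)$ approach the positive numbers $\lambda_j-\lambda_{i+1}>0$. Consequently the whole sum diverges to $+\infty$, and multiplying by the factor $-y$ (which tends to the negative value $-S_{i,i+1}<0$) shows that $f_i(y)\to -\infty$.

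Combining the two observations, continuity on $(0,S_{i,i+1})$ together with $f_i(0^+)>0$ and $f_i(S_{i,i+1}^-)=-\infty$ yields, by the intermediate value theorem, a value $\alpha\in(0,S_{i,i+1})$ with $f_i(\alpha)=0$; this $\alpha$ is the desired real positive root. The main delicacy is simply confirming that only the $j=i+1$ term is responsible for the blow-up and that it carries the correct sign; both facts follow from the positivity hypotheses on $c^{i-1}_i(0)$ and $c^{i-1}_{i+1}(0)$ combined with the strict ordering of the eigenvalues $\lambda_j$, so no further argument beyond the limit computations above is required.
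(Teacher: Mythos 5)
Your proof is correct and follows essentially the same route as the paper: both arguments apply the intermediate value theorem on $(0,S_{i,i+1})$, using $f_i(0)=1$ and the fact that the $j=i+1$ term (positive numerator, denominator tending to $0^+$) forces $f_i(y)\to-\infty$ as $y\uparrow S_{i,i+1}$. Your version is slightly more explicit about why the other summands remain bounded, but the substance is identical.
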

\begin{proof}
By the assumption in the proposition, the quotient $\frac{yc_{i+1}^{i-1}(0)}{\lambda_{i+1}-\lambda_{i}-yc_{i}^{i-1}(0)}>0$ when $0<y<S_{i,i+1},$ since $\lambda_{i+1}-\lambda_{i}>yc_{i}^{i-1}(0)$ and $c_{i+1}^{i-1}(0)>0.$
This quotient is the dominant term near the singularity $S_{i,i+1}$ and thus $\lim_{y\uparrow S_{i,i+1}} f_{i}(y) =-\infty.$ 
Since $f_{i}(0)=1,$ there is at least one positive real root on the interval $(0,S_{i,i+1})$.
%
\end{proof}

\begin{proposition}\label{lem:cijnegreal}
When $c^{i-1}_{i}(0)<0$ there is at least one real positive root of $f_{i}(\cdot).$
\end{proposition}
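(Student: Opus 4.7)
The plan is to establish the existence of a positive real root by continuity and the intermediate value theorem, exploiting the sign of $c^{i-1}_i(0)$ to control where the singularities of $f_i$ lie.

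First I would verify that $f_i$ has no singularities on the positive real axis. The singularities are exactly the points $S_{i,j} = (\lambda_j - \lambda_i)/c^{i-1}_i(0)$ for $j = i+1, \ldots, M$. Since the eigenvalues are ordered so that $\lambda_j > \lambda_i$ for $j > i$, and by hypothesis $c^{i-1}_i(0) < 0$, each $S_{i,j}$ is strictly negative. Equivalently, for every $y > 0$ the denominator $\lambda_j - \lambda_i - y c^{i-1}_i(0)$ is a sum of two positive quantities, hence strictly positive. Therefore $f_i$ is continuous on the closed half-line $[0, \infty)$.

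Next I would evaluate $f_i$ at the endpoints. At $y=0$ the sum vanishes and $f_i(0) = 1 > 0$. As $y \to \infty$, each term in the sum behaves like
\begin{equation*}
\frac{y\, c^{i-1}_j(0)}{\lambda_j - \lambda_i - y c^{i-1}_i(0)} \;\longrightarrow\; -\frac{c^{i-1}_j(0)}{c^{i-1}_i(0)},
\end{equation*}
which is a finite constant. Consequently $f_i(y) = 1 - y + O(1)$, so $f_i(y) \to -\infty$ as $y \to \infty$. The intermediate value theorem then yields a positive real root of $f_i$.

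The only delicate point is confirming the continuity of $f_i$ on $(0, \infty)$, which reduces entirely to the observation about the signs of the singularities; once that is in hand the argument is immediate. No analogue of the case-analysis in Proposition~\ref{lem:cijreal} (which required pairing consecutive coordinates) is needed here, because the single sign condition $c^{i-1}_i(0) < 0$ already pushes every pole of $f_i$ to the negative axis.
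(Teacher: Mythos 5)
Your proposal is correct and follows essentially the same route as the paper: the hypothesis $c^{i-1}_i(0)<0$ pushes all singularities $S_{i,j}$ onto the negative axis, so $f_i$ is continuous on $[0,\infty)$ with $f_i(0)=1$ and $f_i(y)\to-\infty$, and the intermediate value theorem finishes the argument. Your only addition is to spell out why the limit at infinity is $-\infty$ (each summand tends to the finite constant $-c^{i-1}_j(0)/c^{i-1}_i(0)$), a detail the paper merely asserts.
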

\begin{proof}
Recall that $S_{i,i+1}$ is the largest singularity and smaller than zero when $c^{i-1}_{i}(0)<0$. Therefore, $f_{i}(\cdot)$ is continuous on $[0,\infty)$. Since $f_{i}(0)=1$ and $\lim_{y\rightarrow\infty}f_{i}(y)=-\infty$ the function $f_{i}(\cdot)$ has at least one positive real root.
%
\end{proof}

In the proposition below we will provide a condition that ensures that all scalars $\alpha_{i}$ are real.

\begin{proposition}\label{lem:alphareal}
All scalars $\alpha_{i}$ are real when $c_{j}(0)<0$ for all $j\le I$ and $c_{j}(0)<0$ for all $j> I$ with $I\in\{0,1,2,\ldots,M\}.$
\end{proposition}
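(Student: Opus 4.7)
The plan is an induction on $i = 1, 2, \ldots, M$ that jointly establishes (i) that $\alpha_i$ can be chosen real, and (ii) that the sign of $c^{i-1}_j(0)$ agrees with the sign of $c_j(0)$ for every $j \ge i$. At each step the choice between Propositions~\ref{lem:cijreal} and~\ref{lem:cijnegreal} is dictated by the sign of $c_i^{i-1}(0)$: if negative, Proposition~\ref{lem:cijnegreal} yields a real positive root; if positive and $c_{i+1}^{i-1}(0)$ is also positive, Proposition~\ref{lem:cijreal} yields one.

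The workhorse for (ii) is a reduction of \eqqref{eq:cij} to the zeroth component. Evaluating both sides at entry $0$ and simplifying gives
\begin{equation*}
c^{i}_{j}(0) \; = \; c^{i-1}_{j}(0)\cdot\frac{\lambda_{j}-\lambda_{i}}{\lambda_{j}-\lambda_{i}-\alpha_{i}\, c^{i-1}_{i}(0)}.
\end{equation*}
For every $j > i$ the factor $\lambda_j - \lambda_i$ is strictly positive, so the sign transfer from $c^{i-1}_j(0)$ to $c^{i}_j(0)$ is governed by the sign of the denominator. If $c^{i-1}_i(0) < 0$ and $\alpha_i$ is the positive root supplied by Proposition~\ref{lem:cijnegreal}, the denominator is visibly positive; if $c^{i-1}_i(0) > 0$ and $\alpha_i$ is chosen in $(0, S_{i,i+1})$ by Proposition~\ref{lem:cijreal}, then $\alpha_i c^{i-1}_i(0) < \lambda_{i+1} - \lambda_i \le \lambda_j - \lambda_i$ for every $j \ge i+1$, so again the denominator is positive. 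In both situations the signs of the zeroth components indexed $j \ge i+1$ are preserved in the passage from step $i-1$ to step $i$.

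Interpreting the hypothesis as a monotone sign pattern, namely $c_j(0) < 0$ for $j \le I$ and $c_j(0) > 0$ for $j > I$, the preservation property guarantees that at step $i$ one has $\mathrm{sgn}\,c^{i-1}_j(0) = \mathrm{sgn}\,c_j(0)$ for every $j \ge i$. Hence for $i \le I$ we have $c^{i-1}_i(0) < 0$ and apply Proposition~\ref{lem:cijnegreal}, while for $i > I$ we have $c^{i-1}_i(0), c^{i-1}_{i+1}(0) > 0$ and apply Proposition~\ref{lem:cijreal}. The terminal case $i = M$ is trivial: the sum in \eqqref{eq:falpha} is empty, so $f_M(y) = 1 - y$ admits the real root $y = 1$. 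This completes the induction and establishes that every $\alpha_i$ is real.

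The main obstacle is the sign-preservation identity above, together with the verification that the positive root produced by Proposition~\ref{lem:cijreal}, which is only guaranteed to lie in $(0, S_{i,i+1})$, is nevertheless compatible with the denominators $\lambda_j - \lambda_i - \alpha_i c^{i-1}_i(0)$ for all $j > i$, not only for $j = i+1$. This compatibility is immediate from the monotonicity of $\{\lambda_j\}$, but it deserves explicit mention since it is precisely the step that binds the choice of root in Proposition~\ref{lem:cijreal} to the sign hypothesis imposed on the initial eigenvectors of $W$.
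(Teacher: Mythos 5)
Your proof is correct and follows essentially the same route as the paper's: an induction in which each $\alpha_i$ is taken to be a positive real root supplied by Proposition~\ref{lem:cijnegreal} or Proposition~\ref{lem:cijreal}, combined with the zeroth-component identity from \eqqref{eq:cij} showing that the sign pattern of the $c_j(0)$ is preserved, so the hypotheses of those propositions remain available at the next stage. Your explicit check that the denominators $\lambda_j-\lambda_i-\alpha_i c^{i-1}_i(0)$ stay positive for \emph{all} $j>i$ (not only $j=i+1$) and your handling of the terminal case $i=M$ merely make explicit what the paper leaves implicit.
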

\begin{proof}
With this sequence of first elements, either $c_{1}(0)<0$ or $c_{1}(0),c_{2}(0)>0$.
We know by Proposition~\ref{lem:cijreal} and Proposition~\ref{lem:cijnegreal}  that there is at least one real root for $f_{0}(\cdot)$, and moreover that it is positive.
Choose $\alpha_{1}$ to be the smallest positive real root of $f_{i}(\cdot)$.

Eq.~(\ref{eq:cij}) implies the following recursion for the first element of $c^{1}_{j}$:
$$c^{1}_{j}(0)=c_{j}(0)\left(1+\frac{\alpha_{1}c_{1}(0)}{\lambda_{j}-\lambda_{1}-\alpha_{1}c_{1}(0)}\right)=c_{j}(0)\left(\frac{\lambda_{j}-\lambda_{1}}{\lambda_{j}-\lambda_{1}-\alpha_{1}c_{1}(0)}\right).
$$
If $c_{1}(0)<0,$ then $\frac{\lambda_{j}-\lambda_{1}}{\lambda_{j}-\lambda_{1}-\alpha_{1}c_{1}(0)}>1,$ since $\alpha_{1}>0$. 

If $c_{1}(0)>0,$ then $\frac{\lambda_{j}-\lambda_{1}}{\lambda_{j}-\lambda_{1}-\alpha_{1}c_{1}(0)}>0,$ since $\alpha_{1}\in (0,S_{1,2})$.

Thus $c_{j}^{1}(0)$ has the same sign as $c_{j}(0)$ for all $j=2,3,\ldots,M.$ The provided structure of this elements ensures that 
$f_{1}(\cdot)$ has at least one positive real root $\alpha_{1}$, since now $c^{1}_{2}(0)<0$ or $c^{1}_{2}(0),c^{1}_{3}(0)>0$. We can repeat this argument and construct all positive real roots $\alpha_{i}$ for all $i$.
\end{proof}
 
We conjecture that the sequence $c_{i}(0)$ satisfies the condition, given in Proposition~\ref{lem:alphareal} for the case when $B$ is sub-stochastic matrix where the row sum is negative only in the first row. Numerical examples confirm this conjecture, but  we leave its proof as an open problem.

\section{Applications}\label{sec:applications}
Matrix $B$ discussed in the previous sections occurs in many models, particularly when considering a 2 dimensional Markov chain, using the successive lumping approach, cf.~\cite{sslqsf2013}. In order to find the steady state distribution one can use this approach to compute the rate matrix $R$. In many cases calculating this matrix using successive lumping requires the inversion of a matrix with the structure of $B$. Examples can be found in queueing (an $E_{k}/M/c$ queue with batch service), reliability systems and inventory models (a inventory model with batch arrivals and random lead time). Specific about these systems an the complete procedure to compute the steady state distribution can be found in \cite{Katehakis2015comparative}.
Below we will discuss other (classes of) applications in which this type of matrix arises naturally or can be constructed.
\subsection{A general non-transient Markov chain}
Let $Q$ be the transition rate matrix of a general - non-transient Markov chain and let $B$ be the matrix 
with elements 
$$b(i,j)=
\begin{dcases}
q(0,0)-1 & \text{if} \  (i,j) =(0,0), \\
q(i,j)  &\text{otherwise},\\
\end{dcases}
$$
 i.e., $B=Q-\delta' \cdot \delta$.
The solution to the steady state equations:
\begin{align*}
\pi Q &=0 \\
\pi 1 &=1 \\
\end{align*}
is given by: $\pi=\displaystyle\frac{\delta B^{-1}}{\delta B^{-1}\mathbf{1}}.$ Algorithm \ref{Alg:Invd}, can be used to obtain the solution efficiently and in case of a homogenous process Algorithm~\ref{Alg:HomInv} should be applied.

\subsection{A birth-and-death process with an absorbing state}

In this section, we consider a special case of the structure described above that has applications to a birth-and-death process with abandonments. These models occur for example in diffusion processes (cf.~\cite{janssen1981nonequilibrium}) and in  randomly changing environments (cf.~\cite{gaver1984finite}). We assume that in every state there is an (equal) positive rate to leave the system and go to an absorbing state (state $0$). The remaining states form a birth-and-death process. This process is element homogenous according to its description in Definition \ref{def:eh}, but with the difference that $b^{u}_{0}=0$ and $b^{d}_{1}=0.$
Therefore the transition matrix has the following form:
\begin{equation}\label{eq:bdprnh}
B= \left[\begin{array}{rccccc}
-b^d&0&0&0&0&\cdots\\ 
b^z&-b^z-b^{u}&b^u&0&0&\cdots\\ 
b^z&b^d&-b^w&b^u&0&\ddots\\
b^z&0&b^d&-b^w&b^u&\ddots\\
b^z&0&0& b^d&-b^w&\ddots\\
\vdots&\vdots&\ddots&\ddots&\ddots&\ddots
\end{array}\right],
\end{equation}
where options: 1: $b^z_i=b^z$ for all $i$  or the original $B$ with $b^u_0=0$ 


The procedure to find the inverse of $B$ is described in Algorithm~\ref{Alg:InvBD} below. This is a special case of Algorithm~\ref{Alg:HomInv}, that requires element homogeneity; only the first row of this matrix slightly does not meet this requirement.

\begin{algorithm}{}\label{Alg:InvBD}
\begin{itemize}
\item[]
At stage 1: 
\begin{enumerate}[label=\alph*)]
\item
Calculate $\gamma$ using Eq. (\ref{eq:gammah}).
\item
Calculate $\psi$ using \eqqref{eq:psi}.
\item
The $0^{th}$  column of $C$ is computed  using  Eq.~(\ref{eq:c1})  of Proposition \ref{prop:firstcol}. 
\item  The remaining elements of the $0^{th}$  row of  $C$ are $0$, as is described in Proposition~\ref{prop:BDprocess1}.
\item Calculate element $c(1,1)$ by Proposition~\ref{prop:BDprocess2}.
\item The remaining elements ($j\geq 2$) of the first row are: $c(1,j)=\gamma^{j-1} c(1,1)$.
\end{enumerate}
\item[]
At stage $i= 1,2,\ldots,\ell+1$\ : 
\begin{enumerate}[label=\alph*)]
\item The diagonal element $c(i,i)$ is computed by \eqqref{eq:diagonal}.
\item The elements of $C_{i}$, the $i^{th}$ row of $C$ to the right of the diagonal  are computed using Proposition \ref{prop:hom} and in particular \eqqref{eq:gamchom} in that proposition.
\item The under diagonal elements of $C'_{i},$ the $i^{th}$ column of $C$ are computed using
Eq. (\ref{eq:cijunder}) of Proposition~\ref{prop:cijunder}.
\end{enumerate}
\end{itemize}
\end{algorithm}

Most of the steps above are similar to those in Algorithm~\ref{Alg:HomInv}, and the two Propositions below justify the remaining steps. 
\begin{proposition}\label{prop:BDprocess1}
The elements $c(0,j)$ are equal to zero for all $j\geq 1.$
\end{proposition}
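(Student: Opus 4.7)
The plan is to read off the result directly from the system $BC = I$ by exploiting the extreme sparsity of the $0^{\mathrm{th}}$ row of $B$ in the absorbing birth-and-death setting.

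Specifically, inspecting the matrix in \eqqref{eq:bdprnh}, the $0^{\mathrm{th}}$ row of $B$ is
\begin{equation*}
B_0 = \bigl[-b^d,\ 0,\ 0,\ 0,\ \ldots\bigr],
\end{equation*}
because $b^u_0 = 0$ and there are no further nonzero entries in that row (the absorbing state has no outgoing transitions). First I would write out the identity $B_0 \, C'_j = \delta_{0,j}$ column by column. Since $B_0$ has a single nonzero entry in position $0$, this reduces to
\begin{equation*}
-b^d\, c(0,j) = \delta_{0,j}, \qquad j \ge 0.
\end{equation*}

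For $j = 0$ this recovers $c(0,0) = -1/b^d$, consistent with Proposition~\ref{prop:firstcol}. For $j \ge 1$, the right-hand side vanishes; using the standing assumption $b^d > 0$ (which is just \eqqref{eq:bod} in the current notation), we conclude $c(0,j) = 0$, which is the desired claim.

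There is essentially no obstacle here: the statement is immediate once one observes that the absorbing structure forces the $0^{\mathrm{th}}$ row of $B$ to have a unique nonzero entry. The only thing to be mindful of is that invertibility of $B$ must still be invoked so that $C = B^{-1}$ is well-defined; this is guaranteed by the same argument used for the general non-homogeneous case, since $b^d_0 > 0$ remains in force.
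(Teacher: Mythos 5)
Your proof is correct, but it follows a genuinely different route from the paper's. The paper works with the \emph{other} orientation of the inverse identity: it takes the product of the $0^{th}$ row of $C$ with the $0^{th}$ column of $B$ (the displayed equation in its proof, despite the notation, amounts to $-b^d c(0,0)+b^z\sum_{j\ge 1}c(0,j)=1$), uses $c(0,0)=-1/b^d$ to deduce $\sum_{j\ge 1}c(0,j)=0$, and then invokes the sign structure of $C$ --- all entries of the inverse are nonpositive --- to conclude that a sum of nonpositive terms equalling zero forces every term to vanish. You instead use $B_0C'_j=\delta_{0,j}$ and the extreme sparsity of the $0^{th}$ row of $B$ in \eqqref{eq:bdprnh}, which collapses to $-b^d\,c(0,j)=\delta_{0,j}$ and settles the claim in one line. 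Your argument is the more economical of the two: it needs no sign information about $C$ (a fact the paper asserts without proof) and no infinite summation, only that the constructed $C$ is a two-sided inverse, i.e.\ that $BC=I$ holds and not merely $CB=I$ --- a point worth being explicit about for the countable case, and one the paper's algorithm does guarantee. The paper's argument, conversely, survives even if one only has the left-inverse relation $CB=I$ in hand, at the price of importing the nonpositivity of $B^{-1}$.
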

\begin{proof}
This result follows immediately from the fact that all elements are nonpositive and by considering $B_{1}C'_{0}=b^{d}(-1/b^{d})+z\sum^{\infty}_{j=1} c(0,j)=1$.
\end{proof}

\begin{proposition}\label{prop:BDprocess2}
The element $c(1,1)$ can be calculated as follows:
$$c(1,1)=\frac{1}{-b^{z}-b^{u}+b^{d}\gamma}.$$
\end{proposition}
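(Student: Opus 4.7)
The plan is to compute $c(1,1)$ by applying the identity $BC = I$ to row $1$ of $B$ and column $1$ of $C$, i.e., $B_1 C'_1 = 1$. Reading row $1$ off of \eqqref{eq:bdprnh}, its only nonzero entries are $b^z$, $-b^z - b^u$, and $b^u$ in columns $0$, $1$, $2$ respectively, so this equation becomes
\begin{equation*}
b^z c(0,1) + (-b^z - b^u) c(1,1) + b^u c(2,1) = 1.
\end{equation*}

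First I would invoke Proposition~\ref{prop:BDprocess1} to kill the first term, since $c(0,1) = 0$. Next I need to express $c(2,1)$ as a scalar multiple of $c(1,1)$. For each $i \ge 2$, the relation $B_i C'_1 = 0$ together with $c(0,1) = 0$ yields the homogeneous three-term recursion
\begin{equation*}
b^d c(i-1,1) - b^w c(i,1) + b^u c(i+1,1) = 0.
\end{equation*}
The general solution is a linear combination of $\psi^i$ and $\gamma^{-i}$, where $\psi$ is the root in $(0,1)$ of $b^u \lambda^2 - b^w \lambda + b^d = 0$ given by \eqqref{eq:psi}. The ergodicity requirement that $c(i,1) \to 0$ as $i \to \infty$ eliminates the $\gamma^{-i}$ branch, so $c(i,1) = \psi^{i-1} c(1,1)$ for $i \ge 1$. (Alternatively, this follows from Proposition~\ref{prop:cijunder} applied with $j=1$ and $c(0,1) = 0$.) In particular $c(2,1) = \psi c(1,1)$.

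Substituting back gives $(-b^z - b^u + b^u \psi) c(1,1) = 1$, and since \eqqref{eq:psi} provides $b^u \psi = b^d \gamma$, this rearranges to the stated $c(1,1) = 1/(-b^z - b^u + b^d \gamma)$. The only subtlety is ensuring the denominator is nonzero so that division is legal; this follows because $\gamma \in (0,1)$ by Proposition~\ref{prop:hom}, hence $b^d \gamma < b^d \le b^w = b^z + b^d + b^u$ (with equality only in degenerate cases that are excluded by the assumptions on the $b$-coefficients), so $-b^z - b^u + b^d \gamma < 0$. I expect the main (and only) conceptual step to be justifying the ratio $c(i+1,1)/c(i,1) = \psi$ via the ergodicity-selected branch; everything else is direct substitution.
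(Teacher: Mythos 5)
Your proof is correct, but it travels in the ``transpose'' direction from the paper's. The paper uses the single scalar equation $C_{1}B'_{1}=1$ (row $1$ of $C$ against column $1$ of $B$), which reads $(-b^{z}-b^{u})c(1,1)+b^{d}c(1,2)=1$, and then finishes in one line using the row product form $c(1,2)=\gamma c(1,1)$ from Proposition~\ref{prop:hom}. You instead use $B_{1}C'_{1}=1$, which forces you to import two extra facts: $c(0,1)=0$ from Proposition~\ref{prop:BDprocess1} to kill the $b^{z}$ term, and the column product form $c(2,1)=\psi c(1,1)$ from Proposition~\ref{prop:cijunder} (or the ergodicity-selected branch of the three-term recursion), followed by the identity $b^{u}\psi=b^{d}\gamma$ to convert the answer into the stated form. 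Both routes are legitimate one-line consequences of $BC=CB=I$ once the product-form structure is in hand; the paper's choice is leaner because column $1$ of $B$ has only two nonzero entries below row $0$ and the answer comes out directly in terms of $\gamma$, whereas yours demonstrates consistency with the column-side structure and makes explicit why Proposition~\ref{prop:BDprocess1} sits upstream in Algorithm~\ref{Alg:InvBD}.

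One small caution on your closing remark: the chain $b^{d}\gamma<b^{d}\le b^{w}=b^{z}+b^{d}+b^{u}$ shows $b^{d}\gamma<b^{z}+b^{d}+b^{u}$, but what you need for the denominator to be negative is the stronger inequality $b^{d}\gamma<b^{z}+b^{u}$, which does not follow from $\gamma<1$ alone. It does hold here: from $b^{d}\gamma^{2}-b^{w}\gamma+b^{u}=0$ one gets $b^{d}\gamma-b^{z}-b^{u}=b^{d}-b^{u}/\gamma=\tfrac{1}{2}\bigl(2b^{d}-b^{w}-\sqrt{(b^{w})^{2}-4b^{u}b^{d}}\bigr)$, which is strictly negative precisely because $b^{z}>0$ in this absorbing-state model (the paper tacitly divides without checking this at all, so you are still ahead on rigor, just with the wrong justification).
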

\begin{proof}
We consider the equation $C_{1}B'_{1}=1.$ This gives us:
$$(-b^{z}-b^{u})c(1,1)+b^{d}c(1,2)=1.$$
Because $c(1,2)=\gamma c(1,1)$ we derive:
$$c(1,1)=\frac{1}{(-b^{z}-b^{u})+b^{d}\gamma}.$$
\end{proof}

\subsection{Value Functions}
In many models,  e.g., to compute the value function of the 
expected  $\alpha-$discounted cost associated with a continuous time Markov chain, one needs to
to solve equations of the following type:
\begin{equation}\label{eq:vd}
\alpha V =c+Q V
\end{equation} 
where $c$ is  the cost rate function defined on the state space and $V$ the unknown value function to 
be determined as a solution of Eq. (\ref{eq:vd}).
We refer to  \cite{ross2014dp}, \cite{ross2013app}  for more background on these models.
The solution to this equation is 
\begin{equation}\label{eq:vdsol}
 V =-Q_{\alpha}^{-1} c
\end{equation} 
where $Q_{\alpha}=(Q-\alpha I)$ is a rate matrix of a transient Markov Process where in addition to the rates specified by $Q$ one has introduced an additional event of `exiting' or `halting' the process at a
rate $\alpha$.  

When $Q$ has a tridiagonal form, i.e.~is a birth-and-death process, we can use the algorithm of the previous section to compute this inverse of matrix $Q_{\alpha}.$
 
\subsection*{ACKNOWLEDGMENT} 
We gratefully acknowledge support for this project from the National Science Foundation (NSF grant CMMI-14-50743).


\bibliographystyle{plain} 
\bibliography{InverseRef}

\end{document}